\theoremstyle{plain}
\newtheorem{theorem}{Theorem}[section]
\newtheorem{lemma}[theorem]{Lemma}
\theoremstyle{definition}
\theoremstyle{remark}
\newtheorem{remark}[theorem]{Remark}
\newtheorem{assumption}[theorem]{Assumption}
\newcommand{\f}[2]{\frac{#1}{#2}}
\numberwithin{equation}{section}
\newcommand{\dt}{\Delta t}
\newcommand{\dW}{\Delta W}
\newcommand{\abs}[1]{\left\vert #1 \right\vert}
\newcommand{\norm}[1]{\left\lVert #1 \right\rVert}
\newcommand{\ttheta}{\widetilde{\theta}}
\newcommand{\R}{\mathbb{R}}
\newcommand{\E}{\mathbb{E}}
\newcommand{\N}{\mathbb{N}}
\newcommand{\set}[1]{\left\{#1\right\}}
\newcommand{\floor}[1]{\left\lfloor#1\right\rfloor}
\begin{document}

\title[Forward-reverse EM algorithm for Markov chains]{Forward-reverse EM
  algorithm for Markov chains: convergence and numerical analysis}

\thanks{Partially supported by the DFG Research Center \textsc{Matheon}
  ``Mathematics for Key Technologies'' in Berlin and the "Finance and Sustainable Growth" Laboratory of the Louis Bachelier LABEX in Paris.\\
  C.B.~is grateful to Pedro Villanova for enlightening discussions regarding
  the implementation of the algorithm leading to substantial improvements in
  Theorem~\ref{thr:complexity-fr}.
}

\author{Christian Bayer}
\address{Weierstrass Institute\\Mohrenstr. 39\\10117 Berlin\\Germany}
\email{Christian.Bayer@wias-berlin.de}

\author{Hilmar Mai}
\address{Centre de Recherche en Economie et Statistique\\ENSAE-Paris Tech\\92245 Paris\\France}
\email{hilmar.mai@ensae.fr}

\author{John Schoenmakers}
\address{Weierstrass Institute\\Mohrenstr. 39\\10117 Berlin\\Germany}
\email{John.Schoenmakers@wias-berlin.de}

\date{\today}

\begin{abstract}
  We develop a forward-reverse EM (FREM) algorithm for estimating parameters that determine the
  dynamics of a discrete time Markov chain evolving through a certain measurable state space.
  As a key tool for the construction of the FREM method we develop forward-reverse representations
  for Markov chains conditioned on a certain terminal state. These representations may be considered
  as an extension of the earlier work \cite{BS} on conditional diffusions. We proof almost sure convergence of our algorithm for a Markov chain model with curved exponential family structure. On the numerical side we give a complexity analysis of the forward-reverse algorithm by deriving its expected cost. Two application examples are discuss to demonstrate the scope of possible applications ranging from models based on continuous time processes to discrete time Markov chain models.
\end{abstract}

\keywords{EM algorithm, Forward-reverse representations, Markov chain estimation, maximum likelihood estimation, Monte Carlo simulation}

\subjclass[2000]{65C05,65J20}

\maketitle

\section{Introduction}

The EM algorithm going back to the seminal paper \cite{dempster} is a very general method for iterative computation of maximum likelihood estimates in the setting of incomplete data. The algorithm consists of an expectation step (E-step) followed by a maximization step (M-step) which led to the name EM algorithm.

Due to its general applicability and relative simplicity it has nowadays found its way into a great number of applications. These include maximum likelihood estimates of hidden Markov models in \cite{macdonald}, non-linear time series models in \cite{chan} and full information item factor models in \cite{meng} to give just a very limited selection.

Despite the simplicity of the basic idea of the algorithm its implementation in more complex models can be rather challenging. The global maximization of the likelihood in the M-step has recently been addressed successfully (see e.g. \cite{meng1993} and \cite{liu}). On the other hand, when the expectation of the complete likelihood is not known in closed form only partial solutions have been given yet. One approach developed in \cite{wei} uses Monte Carlo approximations of the unknown expectation and was therefore named Monte Carlo EM (MCEM) algorithm. As an alternative procedure the stochastic approximation EM algorithm was suggested in \cite{delyon}.

In this paper we take a completely different route by using a forward-reverse algorithm (cf. \cite{BS}) to approximate the conditional expectation of the complete data likelihood. In this respect we extend the idea from \cite{BS} to a Markov chain setting, which is considered an interesting contribution on its own. Indeed, Markov chains are more general in a sense, since any
diffusion monitored at discrete times yields canonically a Markov chain, but not every chain can be embedded (straightforwardly) into some continuous time diffusion the other way around.

The central issue  is the identification of a parametric Markov
chain model $(X_{n},$ $n=0,1,\ldots)$ based on incomplete data, i.e. realizations of the
model, given on a typically course grid of time points, let us say
$n_{1},n_{2},\ldots n_{N}.$ Let us assume that the chain runs through
$\mathbb{R}^{d}$ and that the transition densities $p_{n,m}^{\theta}(x,y),$
$n\geq m,$ of the chain exist (with $p_{n,n}^{\theta}(x,y):=\delta_{x}(y)$),
where the unknown parameter $\theta$ has to be determined. The log-likelihood function based on the incomplete observations $(X_{n_1},\ldots, X_{n_N})$ is then given by
\begin{equation}\label{lh}
 l(\theta,x_{n_1},\ldots,x_{n_N})=\sum_{i=0}^{N-1} \ln p_{n_{i},n_{i+1}}^{\theta
}(x_{n_{i}},x_{n_{i+1}}),
\end{equation}
with $X_{n_{0}}=x_{0}$ being the
initial state of the chain. Then the standard
method of maximum likelihood estimation would suggest to evaluate%
\begin{equation}
\underset{\theta}{\arg\max}\; l(\theta,X)=\underset{\theta}{\arg\max} \sum_{i=0}^{N-1}\ln p_{n_{i},n_{i+1}}^{\theta
}(X_{n_{i}},X_{n_{i+1}}).\label{am}%
\end{equation}
The problem in this approach lies in the fact that usually only the one-step transition
densities $p_{n,n+1}^{\theta} (x,y)$ are explicitly known, while any multi-step
density $p_{n,m}^{\theta}(x,y)$ for $m>n$ can be expressed as an $m-n-1$ fold
integral of one-step densities. In particular for larger $m-n,$ these multiple
integrals are numerically intractable however. 

In the EM approach, we therefore consider the
alternative problem%
\begin{equation}
\underset{\theta}{\arg\max}\sum_{i=0}^{N-1}\sum_{j=n_{i}}^{n_{i+1}%
-1}\mathbb{E}\ln p_{j,j+1}^{\theta}(X_{j},X_{j+1}),\label{am1}%
\end{equation}
in terms of the \textquotedblleft missing data\textquotedblright\ $X_{n_{i}%
+1},...,X_{n_{i+1}-1},$ $i=0,...,N-1.$ As such, between two such consecutive
time points, $n_{i}$ and $n_{i+1}$ say, the chain may be considered as a
bridge process starting in realization $X_{n_{i}}$ and ending up in
realization $X_{n_{i+1}}$ (under the unknown parameter $\theta$ though), and
so each term in (\ref{am1}) may be considered as an expected functional of the
\textquotedblleft bridged\textquotedblright\ Markov chain starting at time
$n_{i}$ in (data point) $X_{n_{i}},$ conditional on reaching (data point)
$X_{n_{i+1}}$ at time $n_{i+1}.$ 

We will therefore develop firstly an
algorithm for estimating the terms in (\ref{am1}) for a given parameter
$\theta.$ This algorithm will be of forward-reverse type in the spirit of the
one in \cite{BS} developed for diffusion bridges. It should be noted here
that in the last years the problem of simulating diffusion bridges has
attracted much attention. Without pretending to be complete, see for example, \cite{BlS,DH,MT,S,SVW,SMZ}.

Having the forward-reverse algorithm at
hand, we may construct an approximate solution to (\ref{am1}) in a sequential
way by the EM algorithm: Once a generic approximation $\theta_m$ is constructed after $m$ steps, one estimates
\[
\theta_{m+1}:=\underset{\theta}{\arg\max}\sum_{i=0}^{N-1}%
\sum_{j=n_{i}}^{n_{i+1}-1}\widehat{\mathbb{E}}\ln p_{j,j+1}^{\theta}%
(X_{j}^{\theta_m},X_{j+1}^{\theta_m}),
\]
where $X^{\theta_m}$ denotes the Markov bridge process under the
transition law due to parameter $\theta_m$ and each term
\[
\widehat{\mathbb{E}}\ln p_{j,j+1}^{\theta}(X_{j}^{\theta_m%
},X_{j+1}^{\theta_m})
\]
represents a forward-reverse approximation of%
\[
\mathbb{E}\ln p_{j,j+1}^{\theta}(X_{j}^{\theta_m},X_{j+1}%
^{\theta_m})
\]
as a (known) function of $\theta.$

Convergence properties of approximate EM algorithms have drawn considerable
recent attention in the literature mainly driven by it's success in earlier
intractable estimation problems. An overview of existing convergence results
for the MCEM algorithm can be fund in \cite{neath}. Starting from a
convergence result for the forward-reverse representation for Markov chains we
prove almost sure convergence of the FREM sequence in the setting of curved
exponential families based on techniques developed in \cite{BS} and
\cite{fort}. Essentially the only ingredient from the Markov chain model for
this convergence to hold are exponential tails of the transition densities and
their derivatives that are straightforward to check in many examples.

Since computational complexity is always an issue in estimation techniques
that involve a simulation step, we also include a complexity analysis for the
forward-reverse algorithm. We show that the algorithm achieves an expected
cost of the order $O(N \log N )$ for sample size of $N$ forward-reverse
trajectories.

We mention a recent application paper by Bayer, Moraes, Tempone, and Vilanova
\cite{BMTV}, which focuses on practical and implementation issues of the
forward-reverse EM algorithm in the setting of Stochastic Reaction Networks
(SRNs), i.e., of continuous time Markov chains with discrete, but generally
infinite state space. 

The structure of the paper is as follows. In Section \ref{sec2} we
recapitulate and adapt the concept of reversed Markov chains, initially
developed in \cite{MSS2} using the ideas in \cite{MSS1} on reversed
diffusions. A general stochastic representation --- involving standard
(unconditional) expectations only --- for expected functionals of
conditional Markov chains is constructed in Section \ref{sec3}. This
representation allows for a forward reverse EM algorithm that is introduced
and analyzed in Section \ref{sec4}. In Section \ref{sec:conv} we proof almost
sure convergence of the forward-reverse EM algorithm in the setting of curved
exponential families. Implementation and Complexity of the FREM algorithm are
addressed in Section \ref{sec:impl-forrev}. The paper is concluded with two
application examples in Section \ref{sec:appl} that demonstrate the wide scope
of our method.

\section{Recap of forward and reverse representations for Markov chains}\label{sec2}

Consider a discrete-time Markov process $(X_{n},\mathcal{F}_{n}%
),\ n=0,1,2,...,$ on a probability space $(\Omega,\mathcal{F},$ $\mathbb{P})$
with phase space $(S,\mathcal{S})$, henceforth called Markov chain. In general
we assume that $S$ is locally compact and that $\mathcal{S}$ is the Borel
$\sigma$-algebra on $S.$ For example, $S=\mathbb{R}^{d}$ or a proper subset
of $\mathbb{R}^{d}.$ Let $P_{n},\ n\geq0,$ denote the one-step transition
probabilities defined by
\begin{equation}
P_{n}(x,B):=\mathbb{P}(X_{n+1}\in B\ |\ X_{n}=x),\ n=0,1,2,...,\ \ x\in
S,\ B\in\mathcal{S}. \label{Ch0}%
\end{equation}
In the case of an autonomous Markov chain all the one-step transition
probabilities coincide and are equal to $P:=P_{0}=P_{1}=\cdot\cdot\cdot.$

Let $X_{m}^{n,x},$\ $m\geq n,$ be a trajectory of the Markov chain which is at
step $n$ in the point $x,$ i.e., $X_{n}^{n,x}=x.$ The multi-step transition
probabilities $P_{n,m}$ are then defined by%
\[
P_{n,m}(x,B):=\mathbb{P}(X_{m}^{n,x}\in B),\ \ x\in S,\ \ B\in\mathcal{S}%
,\ \ m\geq n.
\]
Due to these definitions, $P_{n,n}(x,B)=\delta_{x}(B)=1_{B}(x)$ (Dirac
measure)$,$ $P_{n}=P_{n,n+1},$ and the Chapman - Kolmogorov equation has the
following form:
\begin{equation}
P_{n,m}(x,B)=\int P_{n,k}(x,dy)P_{k,m}(y,B),\ \ \ \ x\in S,\ B\in
\mathcal{S},\ n\leq k\leq m. \label{Ch1}%
\end{equation}
Let us fix $N>0$ and consider for $0\leq n\leq N$ the function
\begin{equation}
u_{n}(x):=\int P_{n,N}(x,dy)f(y)=\E\ f(X_{N}^{n,x}), \label{Ch2}%
\end{equation}
where $f$ is $\mathcal{S}$-measurable and such that the mathematical
expectation in (\ref{Ch2}) exists; for example, $f$ is bounded. By the Markov
property we have for $0\leq n<N:$%
\begin{align*}
u_{n}(x)  &  =\E\ f(X_{N}^{n,x})=\E\ f(X_{N}^{n+1,X_{n+1}^{n,x}})\\
&  =\E\ \E^{\mathcal{F}_{n+1}}f(X_{N}^{n+1,X_{n+1}^{n,x}})=\E\ \E^{X_{n+1}^{n,x}%
}f(X_{N}^{n+1,X_{n+1}^{n,x}})\\
&  =\E\ u_{n+1}(X_{n+1}^{n,x})=\int u_{n+1}(y)P_{n}(x,dy).
\end{align*}
Thus, $u_{n}(x)$ satisfies the following discrete integral Cauchy problem
\begin{align}
u_{n}(x)  &  =\int u_{n+1}(y)P_{n}(x,dy),\ n<N,\label{Ch3}\\
u_{N}(x)  &  =f(x), \label{Ch4}%
\end{align}
and (\ref{Ch2}) is a forward probabilistic representation of its solution. In
fact, the probabilistic representation (\ref{Ch2}) can be used for simulating
the solution of (\ref{Ch3})-(\ref{Ch4}) by Monte Carlo. For our purpose,
reverse probabilistic representations we need a somewhat more general version
of the above result.

\begin{theorem}[cf.~\cite{MSS2}]
\label{Theorem1} Let $P_{n}$ be the one-step transition density of a Markov
chain $X$ as in (\ref{Ch0}) and let the function $f:$ $S\rightarrow\mathbb{R}$
be measurable and bounded. Let further $\varphi_{n}:$ $S\times S\rightarrow
\mathbb{R}$ be a measurable and bounded functions for $n=0,1,2,...$ Then, the
solution of the problem%
\begin{align}
w_{n}(x)  &  =\int w_{n+1}(z)\varphi_{n}(x,z)P_{n}%
(x,dz),\ \ \ n<N,\label{SysT1}\\
w_{N}(x)  &  =f(x) \label{SysT1a}%
\end{align}
\ has the following probabilistic representation:%
\begin{equation}
w_{n}(x)=\E\left[  f(X_{N}^{n,x})\mathcal{X}_{N}^{n,x,1}\right]  , \label{PU}%
\end{equation}
where $(X,\mathcal{X})$ is an extended Markov chain in which $\mathcal{X}$ is
governed by the equations%
\begin{equation}
\mathcal{X}_{k+1}^{n,x,\gamma}=\mathcal{X}_{k}^{n,x,\gamma}\varphi_{k}%
(X_{k}^{n,x},X_{k+1}^{n,x}),\qquad\mathcal{X}_{n}^{n,x,\gamma}=\gamma
,\nonumber
\end{equation}
where $n\leq k<N.$
\end{theorem}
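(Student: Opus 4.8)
The plan is to establish the representation \eqref{PU} by backward induction on $n$, running from the terminal time $N$ down to $0$. I would introduce $v_{n}(x) := \E\left[f(X_{N}^{n,x})\,\mathcal{X}_{N}^{n,x,1}\right]$ and show that the family $(v_{n})$ satisfies precisely the backward system \eqref{SysT1}--\eqref{SysT1a} that defines $(w_{n})$. Since that system determines its solution uniquely --- the terminal value $w_{N}=f$ is prescribed, and each earlier $w_{n}$ is then fixed explicitly by the one-step recursion from $w_{n+1}$ --- this forces $v_{n}\equiv w_{n}$.

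First I would note that, because the initial value $\gamma$ enters $\mathcal{X}$ purely multiplicatively, the weight admits the closed form
\[
\mathcal{X}_{N}^{n,x,1}=\prod_{k=n}^{N-1}\varphi_{k}(X_{k}^{n,x},X_{k+1}^{n,x}),
\]
an empty product equal to $1$ when $n=N$. Boundedness of $f$ and of each $\varphi_{k}$ then guarantees that $f(X_{N}^{n,x})\,\mathcal{X}_{N}^{n,x,1}$ is a bounded random variable, being a finite product of $N-n+1$ bounded factors, so all expectations below are finite and the ensuing manipulations are legitimate. The base case is immediate: at $n=N$ the product is empty and $X_{N}^{N,x}=x$, whence $v_{N}(x)=f(x)=w_{N}(x)$.

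For the inductive step I would peel off the leading factor of the product. Using $X_{n}^{n,x}=x$, the weight factorizes as
\[
\mathcal{X}_{N}^{n,x,1}=\varphi_{n}(x,X_{n+1}^{n,x})\prod_{k=n+1}^{N-1}\varphi_{k}(X_{k}^{n,x},X_{k+1}^{n,x}).
\]
Conditioning on $\mathcal{F}_{n+1}$ and invoking the Markov property exactly as in the derivation of \eqref{Ch3}, the trajectory $(X_{k}^{n,x})_{k\geq n+1}$ emanating from $X_{n+1}^{n,x}$ has the law of $(X_{k}^{n+1,X_{n+1}^{n,x}})_{k\geq n+1}$; hence the conditional expectation of $f(X_{N}^{n,x})\prod_{k=n+1}^{N-1}\varphi_{k}(X_{k}^{n,x},X_{k+1}^{n,x})$ given $X_{n+1}^{n,x}=z$ is exactly $v_{n+1}(z)$. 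Taking the outer expectation yields
\[
v_{n}(x)=\E\left[\varphi_{n}(x,X_{n+1}^{n,x})\,v_{n+1}(X_{n+1}^{n,x})\right]=\int v_{n+1}(z)\,\varphi_{n}(x,z)\,P_{n}(x,dz),
\]
which is \eqref{SysT1}. This closes the induction and proves $v_{n}=w_{n}$.

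I do not anticipate a genuine obstacle here; the only point needing care is the bookkeeping around the multiplicative restart of $\mathcal{X}$ --- namely, that stripping off the leading factor $\varphi_{n}(x,X_{n+1}^{n,x})$ leaves a weight restarting from $1$ at time $n+1$, which is what allows it to be identified with $\mathcal{X}^{n+1,z,1}$ under the Markov property. This, together with the observation that uniqueness of the backward system upgrades \textquotedblleft $v$ solves the system\textquotedblright\ to \textquotedblleft $v=w$\textquotedblright, makes the argument routine once the boundedness hypotheses are in place to keep every integrand bounded and every interchange of conditioning and expectation valid.
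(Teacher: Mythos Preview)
Your proposal is correct and follows essentially the same approach as the paper: both arguments show that the probabilistic expression $\E[f(X_{N}^{n,x})\mathcal{X}_{N}^{n,x,1}]$ satisfies the backward recursion \eqref{SysT1}--\eqref{SysT1a}, whence it must agree with $w_{n}$ by uniqueness of that system. The paper's version is slightly more compressed---it uses the identity $\mathcal{X}_{k}^{n,x,\gamma}=\gamma\,\mathcal{X}_{k}^{n,x,1}$ to restart the weight rather than writing out the product explicitly---but the substance is the same.
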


\begin{proof}
Note that $\mathcal{X}_{k}^{n,x,\gamma}=\gamma\mathcal{X}_{k}^{n,x,1}.$ Thus,
for $n<N,$ (\ref{PU}) may be written as%
\begin{align*}
  w_{n}(x) &= \E\left[  f(X_{N}^{n+1,X_{n+1}^{n,x}})\mathcal{X}_{N}^{n+1,X_{n+1}%
      ^{n,x},\mathcal{X}_{n+1}^{n,x,1}}\right] \\
  &= \E\ \mathcal{X}_{n+1}^{n,x,1}\E^{(X_{n+1}^{n,x},\mathcal{X}_{n+1}^{n,x,1}%
    )}\left[  f(X_{N}^{n+1,X_{n+1}^{n,x}})\mathcal{X}_{N}^{n+1,X_{n+1}^{n,x}%
      ,1}\right] \\
  &= \E\ \left[  \mathcal{X}_{n+1}^{n,x,1}w_{n+1}(X_{n+1}^{n,x})\right] \\
  &=\E\ \left[  \varphi_{n}(x,X_{n+1}^{n,x})w_{n+1}(X_{n+1}^{n,x})\right] \\
  &=\int w_{n+1}(z)\varphi_{n}(x,z)P_{n}(x,dz),
\end{align*}
and (\ref{SysT1a}) is trivially fulfilled for $n=N.$
\end{proof}

\subsection{Reverse probabilistic representations}

We henceforth take $\left(  S,\mathcal{S}\right)  =\left(  \mathbb{R}%
^{d},\mathcal{B}(\mathbb{R}^{d})\right)  $ and assume that the transition
probabilities $P_{n,m}(x,dy)$ have densities $p_{n,m}(x,y)$ with respect to
the Lebesgue measure on $(S,\mathcal{S}).$ We note however that without any
problem one may consider more general state spaces equipped with some
reference measure, and transition probabilities absolutely continuous to with
respect to it. The representation (\ref{Ch2}) can thus be written in the form
\begin{equation}
I(f):=\E\ f(X_{N}^{n,x})=\int p_{n,N}(x,y)f(y)dy,\text{ \ \ }0\leq n\leq N.
\label{R1}%
\end{equation}
Let the initial value $\xi$ of the chain $X$ at moment $n$ be random with
density $g(x).$ Consider the functional
\begin{equation}
I(g,f)=\int\int g(x)p_{n,N}(x,y)f(y)dxdy=\E f(X_{N}^{n,\xi}). \label{R2}%
\end{equation}
Formally, by taking for $g$ a $\delta$-function we obtain (\ref{R1}) again,
and by taking $f$ to be a $\delta$-function we obtain the integral%
\begin{equation}
J(g):=\int g(x)p_{n,N}(x,y)dx. \label{R3}%
\end{equation}
We now propose suitable (reverse) probabilistic representations for $J(g),$
where $g$ is an arbitrary test function (not necessarily a density). For this
we are going to construct a class of reverse Markov chains that allow for a
probabilistic representation for the solution of (\ref{R3}).

Let us fix a number $N\in\mathbb{N}$ and consider for $0\leq m<N,$ functions
$\psi_{m}:$ $S\times S\rightarrow\mathbb{R}_{+}$ such that for each $m$ and
$y$ the function
\begin{equation}
q_{m}(y,\cdot):=\frac{p_{N-m-1}(\cdot,y)}{\psi_{m}(y,\cdot)},\text{ \ \ }0\leq
m<N, \label{varfi}%
\end{equation}
is a density on $S.$ For example, one could take $\psi_{m}$ independent of the
second argument, and then obviously%
\begin{equation}
\psi_{m}(y)=\int p_{N-m-1}(z,y)dz. \label{exa}%
\end{equation}
We now introduce a \textquotedblleft reverse\textquotedblright\ processes
$(Y_{m}^{y},\mathcal{Y}_{m}^{y})_{0\leq m\leq N}$ by the system%
\begin{gather}
\mathbb{P}(\left.  Y_{m+1}^{y}\in dz^{\prime}\right\vert \text{ }Y_{m}%
^{y}=z)=q_{m}(z,z^{\prime})dz^{\prime},\nonumber\\
\mathcal{Y}_{m+1}^{y}=\mathcal{Y}_{m}^{y}\psi_{m}(Y_{m}^{y},Y_{m+1}%
^{y}),\label{Yrev}\\
Y_{0}^{y}:=Y_{0}^{0,y}:=y,\text{ \ \ }\mathcal{Y}_{0}^{y}:=\mathcal{Y}%
_{0}^{0,y,1}:=1,\text{ \ \ }0\leq m<N,\nonumber
\end{gather}
hence $Y^{y}$ is governed by the one-step transition probabilities
$Q_{m}(z,dz^{\prime}):=$ $q_{m}(z,z^{\prime})dz^{\prime}$ (i.e. $Q_{m}$
instead of $P_{m}$).

\begin{theorem}
\label{rev_th} For any $n,$ $0\leq n\leq N,$ (\ref{R3}) has the following
probabilistic representation.%
\[
\int g(x)p_{n,N}(x,y)dx=\E\left[  g(Y_{N-n}^{y})\mathcal{Y}_{N-n}^{y}\right]
,
\]
where $g$ is an arbitrary test function (a \textquotedblright
density\textquotedblright\ $p_{m,m}$ has to be interpreted as a Dirac
distribution or $\delta$-function).
\end{theorem}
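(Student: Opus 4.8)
The plan is to prove the identity by induction on $n$, running downward from $n=N$ to $n=0$, which amounts to increasing the number $N-n$ of steps taken by the reverse chain $(Y^{y},\mathcal{Y}^{y})$. The one computational fact driving everything is the cancellation built into (\ref{varfi}): since $q_{m}(z,z')=p_{N-m-1}(z',z)/\psi_{m}(z,z')$, the weight increment in (\ref{Yrev}) exactly removes the normalizing factor, so that $\psi_{m}(z,z')\,q_{m}(z,z')=p_{N-m-1}(z',z)$. For the base case $n=N$ we have $N-n=0$, hence $Y_{0}^{y}=y$ and $\mathcal{Y}_{0}^{y}=1$ by (\ref{Yrev}), giving $\E[g(Y_{0}^{y})\mathcal{Y}_{0}^{y}]=g(y)$; on the other side $\int g(x)p_{N,N}(x,y)\,dx=g(y)$ under the stated Dirac interpretation of $p_{N,N}$.

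For the inductive step I assume the representation holds at level $n+1$ for every test function. First I would split the left-hand side by one application of the Chapman--Kolmogorov equation (\ref{Ch1}) at the intermediate time $n+1$, writing $\int g(x)p_{n,N}(x,y)\,dx=\int h(w)\,p_{n+1,N}(w,y)\,dw$ with $h(w):=\int g(x)p_{n}(x,w)\,dx$. Then I would treat the right-hand side by conditioning on the reverse chain up to step $N-n-1$ and using the one-step dynamics for the last transition: the conditional expectation of $g(Y_{N-n}^{y})\mathcal{Y}_{N-n}^{y}$ given the chain up to step $N-n-1$ equals $\mathcal{Y}_{N-n-1}^{y}\int g(z')\psi_{N-n-1}(z,z')q_{N-n-1}(z,z')\,dz'$ with $z=Y_{N-n-1}^{y}$, and by the cancellation above together with $N-(N-n-1)-1=n$ this integral reduces to $\int g(z')p_{n}(z',z)\,dz'=h(z)$. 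Taking expectations yields $\E[g(Y_{N-n}^{y})\mathcal{Y}_{N-n}^{y}]=\E[h(Y_{N-n-1}^{y})\mathcal{Y}_{N-n-1}^{y}]$. Applying the induction hypothesis to the test function $h$ at level $n+1$ identifies the latter with $\int h(w)p_{n+1,N}(w,y)\,dw$, which is exactly the left-hand side produced above, closing the induction.

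I expect the only genuine obstacle to be bookkeeping rather than any real difficulty. One must keep the time reindexing $m\leftrightarrow N-m$ straight, since the reverse chain advances in $m$ while the original transition densities are read backward through $p_{N-m-1}$, and one must be careful to invoke the induction hypothesis for the auxiliary function $h$ rather than for $g$ itself, which is precisely why stating the theorem for an arbitrary test function (not only a density) is essential. A secondary point is the base-case $\delta$-function convention and the tacit integrability needed for the expectation in (\ref{PU}) and the interchanges of integration and conditioning to be meaningful; since each $q_{m}$ is a genuine density and each $\psi_{m}$ is nonnegative, Tonelli applies to the nonnegative integrands and justifies the manipulations.

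As a cross-check I would also verify the statement directly by unrolling the reverse chain. The telescoping product $\prod_{m=0}^{N-n-1}\psi_{m}(Y_{m}^{y},Y_{m+1}^{y})\,q_{m}(Y_{m}^{y},Y_{m+1}^{y})=\prod_{m=0}^{N-n-1}p_{N-m-1}(Y_{m+1}^{y},Y_{m}^{y})$ reassembles, after relabeling $x=Y_{N-n}^{y}$ and the intermediate points, precisely the Chapman--Kolmogorov expansion of $p_{n,N}(x,y)$ into one-step densities, so that $\E[g(Y_{N-n}^{y})\mathcal{Y}_{N-n}^{y}]=\int g(x)p_{n,N}(x,y)\,dx$ follows immediately. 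This unrolled computation makes transparent why the $\psi_{m}$ are free modeling parameters: they cancel identically in the representation and only affect the variance of the resulting Monte Carlo estimator, not its mean.
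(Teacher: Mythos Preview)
Your proof is correct. Both your argument and the paper's rest on the same one-step identity $\psi_{m}(z,z')q_{m}(z,z')=p_{N-m-1}(z',z)$, but the organization differs. The paper does not induct directly on $n$: it introduces $v_{k}(y)=\int g(x)p_{n,k}(x,y)\,dx$, obtains from Chapman--Kolmogorov the recursion $v_{k}(y)=\int v_{k-1}(z)p_{k-1}(z,y)\,dz$, time-reverses to $\widetilde v_{m}:=v_{N+n-m}$, and then recognizes the resulting system as an instance of Theorem~\ref{Theorem1}, which it invokes as a black box; a final step identifies the auxiliary shifted process $(Y^{(n),y},\mathcal{Y}^{(n),y})$ with the canonical process $(Y^{y},\mathcal{Y}^{y})$. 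Your approach instead inlines the content of Theorem~\ref{Theorem1} into a self-contained downward induction on $n$, bypassing both the time-reversal reindexing and the process-matching step. The paper's route is more modular and makes explicit that the reverse representation is a special case of the general weighted-chain framework; yours is shorter and more transparent for this particular statement. Your unrolled cross-check is also valid and is the closed-form version of the same induction.
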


\begin{proof}
From the Chapman - Kolmogorov equation (\ref{Ch1}) we obtain straightforwardly
the Chapman-Kolmogorov equation for densities,
\begin{equation}
p_{n,m}(x,y)=\int p_{n,k}(x,z)p_{k,m}(z,y)dz,\ \ x,y\in S,\ n\leq k\leq m.
\label{R4}%
\end{equation}
Let us now fix $n,$ $n<N$ (for $n=N$ the statement is trivial) also, and
introduce the functions%
\begin{equation}
v_{k}(y):=\int g(x)p_{n,k}(x,y)dx,\ n\leq k\leq N. \label{R45}%
\end{equation}
From (\ref{R4}) we get
\begin{align}
v_{k}(y)  &  =\int v_{k-1}(z)p_{k-1}(z,y)dz,\ n<k\leq N,\label{R5}\\
v_{n}(y)  &  =g(y),\nonumber
\end{align}
where $p_{k-1}:=p_{k-1,k}$ denote the one-step densities. For $\ n<k\leq N$ we
now consider a \textquotedblleft reversed\textquotedblright\ time variable
$m=N+n-k$ and write with $\widetilde{v}_{m}(y):=v_{N+n-m}(y)$ and
(\ref{varfi}) system (\ref{R5}) in the form%

\begin{align}
\widetilde{v}_{m}(y)  &  =\int\widetilde{v}_{m+1}(z)\psi_{m-n}(y,z)q_{m-n}%
(y,z)dz,\ \ \ n\leq m<N,\label{R6_1}\\
\widetilde{v}_{N}(y)  &  =g(y).\nonumber
\end{align}
Let us write (\ref{R6_1}) in a slightly different form,%
\begin{align*}
\widetilde{v}_{m}(y)  &  =\int\widetilde{v}_{m+1}(z)\psi_{m}^{(n)}%
(y,z)q_{m}^{(n)}(y,z)dz,\ \ \ n\leq m<N,\\
\widetilde{v}_{N}(y)  &  =g(y)
\end{align*}
with $\psi_{m}^{(n)}:=\psi_{m-n}$ and $q_{m}^{(n)}:=q_{m-n}.$ Via
Theorem~\ref{Theorem1} we next obtain a probabilistic representation of the
form (\ref{PU}) for the solution of problem (\ref{R6_1}), hence (\ref{R3}) or
$J(g).$ Indeed, by taking in Theorem~\ref{Theorem1} instead of $X$ a Markov
chain $\left(  Y_{m}^{(n),y}\right)  _{n\leq m\leq N},$ where $Y^{(n),y}$ is
governed by the one-step transition probabilities $Q_{m}^{(n)}(z,dz^{\prime
}):=$ $q_{m}^{(n)}(z,z^{\prime})dz^{\prime},$ $n\leq m<N,$ with initial
condition $Y_{n}^{(n),y}=y,$ and constructing $\left(  \mathcal{Y}_{m}%
^{(n),y}\right)  _{n\leq m\leq N}$ according to%
\begin{equation}
\mathcal{Y}_{m+1}^{(n),y}=\mathcal{Y}_{m}^{(n),y}\psi_{m}^{(n)}(Y_{m}%
^{(n),y},Y_{m+1}^{(n),y}),\qquad\mathcal{Y}_{n}^{(n),y}=1,\qquad n\leq m<N,
\label{EE}%
\end{equation}
it follows by Theorem ~\ref{Theorem1} that%
\begin{equation}
J(g)=\widetilde{v}_{n}(y)=v_{N}(y)=\E\left[  g(Y_{N}^{(n),y})\mathcal{Y}%
_{N}^{(n),y}\right]  . \label{RevProb}%
\end{equation}
It remains to see that
\[
\E\left[  g(Y_{N}^{(n),y})\mathcal{Y}_{N}^{(n),y}\right]  =\E\left[
g(Y_{N-n}^{y})\mathcal{Y}_{N-n}^{y}\right]
\]
which follows from the fact that initial values and the one step transition
probabilities of the processes
\[
\left(  Y_{n+i}^{(n),y},\mathcal{Y}_{n+i}^{(n),y}\right)  _{i=0,...,N-n}\text{
\ \ and \ \ }\left(  Y_{i}^{y},\mathcal{Y}_{i}^{y}\right)  _{i=0,...,N-n}%
\]
coincide.
\end{proof}

It should be stressed that, in contrast to a corresponding theorem in
\cite{MSS2}, Theorem~\ref{rev_th} provides a family of probabilistic
representations indexed by $n=1,\ldots,N,$ that involves only one common
reverse process $Y^{y}.$ In Theorem~\ref{rev_th} $N$ was fixed but, when
different $N$ are in play, we will denote them by $Y^{y;N}.$ It turns out that
this extension of the related result in \cite{MSS2} is crucial for deriving
probabilistic representations for conditional Markov chains below (cf.
\cite{BS}).


\section{Simulation of conditional expectations via forward-reverse
representations} \label{sec3}

In this section we describe for a Markov Chain (\ref{Ch0}) an efficient
procedure for estimating the final distributions of a chain $X=(X_{n}%
)_{n=0,...,N}$ conditioned, or pinned, on a terminal state $X_{N}.$ More
specifically, for some given (unconditional) process $X$ we aim at
simulation of the functional
\begin{equation}
\mathbb{E}\left[  \left.  g(X_{m_{1}},\ldots,X_{m_{r}})\right\vert \text{
}X_{N}=y,\,X_{0}=x\right]  , \label{cp}%
\end{equation}
where $0\leq m_{1}<m_{2}<\cdot\cdot\cdot<m_{r}<N$ (hence $r<N$), $g$ is an arbitrarily given
suitable test function, and $x, y\in\mathbb{R}^{d}$ are given states. The
procedure proposed below is in fact an extension of the method
developed in \cite{BS} to discrete time Markov chains. We note that similar
techniques as in \cite{BS} also allow us to treat the more general problem
\begin{equation*}
\mathbb{E}\left[  \left.  g(X_{m_{1}},\ldots,X_{m_{r}})\right\vert \text{
}X_{N}\in A,\,X_{0}=x\right]
\end{equation*}
for suitable sets $A \subset \mathbb{R}^d$.

\subsection{Forward-reverse representations of conditional expectations}
Let us consider the problem (\ref{cp}) for fixed $x,y\in\mathbb{R}^{d}$
(i.e. $A=\{y\}$). We firstly state the following central theorem.

\begin{theorem}
\label{key} Given a grid $\mathcal{D}_{l}:=\{0\leq n^{\ast}<n_{1}<\cdot
\cdot\cdot<n_{l} =: N\},$ it holds that
\begin{multline*}
\mathbb{E}\left[  f(Y_{n_{l}-n_{0}}^{y;n_{l}},Y_{n_{l}-n_{1}}^{y;n_{l}}%
,\ldots,Y_{n_{l}-n_{l-1}}^{y;n_{l}})\mathcal{Y}_{n_{l}-n_{0}}^{y;n_{l}}\right]
\\
=\int_{\mathbb{R}^{d\times L}}f(y_{0},y_{1},\ldots,y_{l-1})\prod_{i=1}%
^{l}p_{n_{i-1},n_{i}}(y_{i-1},y_{i})dy_{i-1}%
\end{multline*}
with $y_{l}:=y$ and $n_{0}:=n^{\ast}.$
\end{theorem}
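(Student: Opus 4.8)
The plan is to evaluate the left-hand side directly as an integral over the trajectory of the common reverse process $Y^{y;n_l}$, and then to collapse the unobserved coordinates by Chapman--Kolmogorov. Write $N:=n_l$ and let $z_m:=Y_m^{y;n_l}$ denote the reverse process read at reverse time $m$, so that $z_0=y$. Both the test function $f$ and the weight $\mathcal{Y}_{n_l-n_0}^{y;n_l}$ depend only on the path up to reverse time $n_l-n_0=N-n^{\ast}$, and since the one-step laws $q_m$ of $Y^{y;n_l}$ are genuine densities, the coordinates at reverse times beyond $N-n^{\ast}$ integrate out to $1$. First I would therefore express the left-hand side as
\[
\int f(z_{n_l-n_0},\ldots,z_{n_l-n_{l-1}})\prod_{m=0}^{N-n^{\ast}-1}\psi_m(z_m,z_{m+1})\,q_m(z_m,z_{m+1})\,dz_1\cdots dz_{N-n^{\ast}},
\]
using that by \eqref{Yrev} the path density is $\prod_m q_m(z_m,z_{m+1})$ while the accumulated weight is $\mathcal{Y}_{n_l-n_0}^{y;n_l}=\prod_m\psi_m(z_m,z_{m+1})$.

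The second step is to invoke the defining relation \eqref{varfi}, which gives $q_m(z_m,z_{m+1})\,\psi_m(z_m,z_{m+1})=p_{N-m-1}(z_{m+1},z_m)$, so that each factor becomes a forward one-step density. I would then reindex to forward time by setting $w_k:=z_{N-k}$ for $n^{\ast}\le k\le N$, hence $w_N=y$; under this change the product turns into $\prod_{k=n^{\ast}+1}^{N}p_{k-1,k}(w_{k-1},w_k)$, while the arguments of $f$ become exactly $(w_{n_0},\ldots,w_{n_{l-1}})$, because reverse time $n_l-n_i$ corresponds to forward time $n_i$.

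Finally, I would integrate out every $w_k$ whose index $k$ does not belong to the grid $\{n_0,\ldots,n_{l-1}\}$. Applying the Chapman--Kolmogorov identity \eqref{R4} repeatedly on each block collapses $\prod_{k=n_{i-1}+1}^{n_i}p_{k-1,k}$ into the multi-step density $p_{n_{i-1},n_i}(w_{n_{i-1}},w_{n_i})$ for $i=1,\ldots,l$ (with $w_{n_l}=y$), and renaming $w_{n_i}=y_i$ produces precisely the right-hand side.

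The two changes of index are routine; the one point that needs care is arranging the weight to accumulate over exactly the reverse-time block $0,\ldots,N-n^{\ast}-1$ so that the products $\prod q_m$ and $\prod\psi_m$ telescope into the single forward product $\prod p_{k-1,k}$ — this is where the flexibility of Theorem~\ref{rev_th}, namely a whole family of representations sharing one reverse process $Y^{y;N}$, is essential. Alternatively one could argue by induction on $l$, with $l=1$ being Theorem~\ref{rev_th} applied to $g=f$ and $n=n^{\ast}$, peeling off the innermost observation at each step via the Markov property of the reverse chain; the direct computation above seems more transparent.
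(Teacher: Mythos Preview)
Your argument is correct. The key identity $q_m(z_m,z_{m+1})\psi_m(z_m,z_{m+1})=p_{N-m-1}(z_{m+1},z_m)$ from \eqref{varfi} is applied correctly, the reindexing $w_k=z_{N-k}$ is consistent, and the Chapman--Kolmogorov collapse over the non-grid indices is straightforward.

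The paper organises the same ingredients differently: it first \emph{reduces} to the fine grid $n_i-n_{i-1}=1$ by extending $f$ to a function $\tilde f$ of all intermediate coordinates (using Chapman--Kolmogorov to unpack the multi-step densities), and then proves the resulting identity \eqref{eq:2} by induction on the number $p$ of reverse steps, peeling off one factor $\psi_{p-1}q_{p-1}=p_{N-p}$ at a time via the Markov property. In other words, the paper converts to the fine grid first and proves inductively, whereas you write out the full path integral in one stroke, convert all factors simultaneously, and only then marginalise back down to the coarse grid. The two routes are logically equivalent and use the same key relation \eqref{varfi}; your direct computation is shorter and, as you say, arguably more transparent, while the inductive version has the minor advantage of making the base case $p=1$ explicitly Theorem~\ref{rev_th}. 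You already note the inductive alternative yourself, so there is nothing to add.
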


\begin{proof}
  Without loss of generality, we assume in this proof that the grid satisfies
  $n_{i} - n_{i-1} = 1$, $i=1, \ldots, l$. Indeed, extend $f:
  \mathbb{R}^{d\times l} \to \mathbb{R}$ to a function $\tilde{f}:
  \mathbb{R}^{d \times (N-n^\ast)} \to \mathbb{R}$ such that
  \begin{equation*}
   \tilde{f}\left( Y_{N-n^\ast}^{y;N}, Y_{N-n^\ast-1}^{y;N}, \ldots,
     Y_{2}^{y;N}, Y_{1}^{y;N} \right) = f\left(
     Y_{n_l-n_0}^{y;N}, Y_{n_l-n_1}^{y;N}, \ldots, Y_{n_l-n_{l-1}}^{y;N} \right).
  \end{equation*}
  Then, re-expressing the transition densities $p_{n_{i-1}, n_i}$ in terms of
  the one-step transition densities $p_i$ using Chapman-Kolmogorov, we see
  that the statement of the theorem is equivalent to
  \begin{multline}
    \label{eq:1}
    \mathbb{E}\left[ \tilde{f}\left( Y_{N-n^\ast}^{y;N},
        Y_{N-n^\ast-1}^{y;N}, \ldots, Y_{1}^{y;N} \right)
      \mathcal{Y}^{y;N}_{N-n^\ast} \right] \\=
    \int_{\mathbb{R}^{d\times(N-n^\ast)}} \tilde{f}\left( y_{n^\ast}, \ldots,
      y_{N-1} \right) \prod_{i=n^\ast+1}^{N} p_{i-1}(y_{i-1}, y_i) dy_{i-1}
  \end{multline}
  with $y_{N} \equiv y$.
  In fact, we shall prove that
  \begin{equation}
    \label{eq:2}
    \mathbb{E}\left[ f_p\left( Y^{y;N}_{p}, \ldots, Y^{y;N}_{1} \right)
      \mathcal{Y}^{y;N}_{p} \right] =
    \int f_p(y_{N-p}, \ldots, y_{N-1}) \prod_{i=N-p+1}^N p_{i-1}(y_{i-1},
    y_i) dy_{i-1}
  \end{equation}
  for any $1 \le p \le N-n^\ast$ for any (e.g., bounded measurable) function
  $f_p : \mathbb{R}^{d\times p} \to \mathbb{R}$. \eqref{eq:2} gives the
  formula from the statement of the theorem for $p=N-n^\ast$ with
  $f_{N-n^\ast}$ being the function $\tilde{f}$ from above. We
  prove~\eqref{eq:2} by induction on $p$. For $p=1$, this boils down to
  Theorem~\ref{rev_th} with $n=N-1$.

  For the step from $p-1$ to $p$, we note that by definition
  \begin{equation*}
    \mathcal{Y}_p^{y;N} = \mathcal{Y}_{p-1}^{y;N} \psi_{p-1}\left(
      Y_{p-1}^{y;N}, Y_p^{y;N} \right),
  \end{equation*}
  with $\psi_{p-1}(y, \cdot) q_{p-1}(y, \cdot) = p_{N-(p-1)-1}(\cdot, y) =
  p_{N-p}(\cdot, y)$ by~\eqref{varfi}. Hence, we have
  \begin{align*}
    \mathbb{E}\left[ f_p(Y_{p}^{y;N}, Y_{p-1}^{y;N} ,\ldots, Y_{1}^{y;N})
      \mathcal{Y}_{p}^{y;N} \right] &= \mathbb{E}\left[ f_p(Y_{p}^{y;N},
      Y_{p-1}^{y;N} ,\ldots, Y_{1}^{y;N}) \mathcal{Y}_{p-1}^{y;N}
      \psi_{p-1}(Y_{p-1}^{y;N}, Y_{p}^{y;N}) \right] \\
    &= \mathbb{E}\left[ g\left( Y_{p-1}^{y;N}, \ldots, Y_1^{y;N} \right)
      \mathcal{Y}^{y;N}_{p-1} \right],
  \end{align*}
  with
  \begin{align*}
    g(z_{p-1}, \ldots, z_1) &\equiv \mathbb{E}\left[ \left. f_p\left( Y_{p}^{y;N},
        Y_{p-1}^{y;N}, \ldots, Y_1^{y;N} \right) \psi_{p-1}(Y_{p-1}^{y;N},
      Y_{p}^{y;N}) \right| Y_{p-1}^{y;N} = z_{p-1}, \ldots, Y^{y;N}_1 = z_1 \right]\\
  &= \int f_p(z, z_{p-1}, \ldots, z_1) p_{N-p}(z, z_{p-1}) dz.
  \end{align*}
  Applying the induction hypothesis for $f_{p-1} = g$, we obtain
  \begin{align*}
    \mathbb{E}\left[ f_p(Y_{p}^{y;N}, Y_{p-1}^{y;N} ,\ldots, Y_{1}^{y;N})
      \mathcal{Y}_{p}^{y;N} \right] &= \mathbb{E}\left[ g\left( Y_{p-1}^{y;N},
        \ldots, Y_1^{y;N} \right) \mathcal{Y}^{y;N}_{p-1} \right]\\
    &= \int g(y_{N-p+1}, \ldots, y_{N-1})
    \prod_{i=N-p+2}^N p_{i-1}(y_{i-1}, y_{i}) dy_{i-1} \\
    &= \int f_p(y_{N-p}, \ldots, y_{N-1}) \prod_{i=N-p+1}^N p_{i-1}(y_{i-1},
    y_i) dy_{i-1}.\qedhere
  \end{align*}
\end{proof}

We now consider an extended integer sequence%
\[
0<m_{1}<\cdot\cdot\cdot<m_{k}=n^{\ast}=n_{0}<n_{1}<\cdot\cdot\cdot<n_{l}=N,
\]
and a kernel $K_{\epsilon}$ of the form
\[
K_{\epsilon}(u):=\epsilon^{-d}K(u/\epsilon),\quad y\in\mathbb{R}^{d},
\]
with $K$ being integrable on $\mathbb{R}^{d}$ and $\int_{\mathbb{R}^{d}%
}K(u)du=1.$ Formally $K_{\epsilon}$ converges to the delta function
$\delta_{0}$ on $\mathbb{R}^{d}$ (in distribution sense) as $\epsilon
\downarrow0.$ We then have the following stochastic representation (involving
standard expectations only) for (\ref{cp}) \ with $n_{i}=m_{k+i},$
$i=0,...,l=r-k+1.$

\begin{theorem}
\label{FRR} Let the chain $\left(  Y,\mathcal{Y}\right)  :=\left(
Y^{y;N},\mathcal{Y}^{y;N}\right)  $ be given by (\ref{Yrev}), and the modified
integer sequence $\left(  \widehat{n}_{\cdot}\right)  $ be defined by%
\begin{equation}
\widehat{n}_{i}:=n_{l}-n_{l-i},\quad i=1,\ldots,l.\label{eq:hat-grid}%
\end{equation}
It then holds%
\begin{gather}
\mathbb{E}\left[  \left.  g(X_{m_{1}},\ldots,X_{m_{r}})\right\vert \ X_{m_{0}%
}=x,\ X_{N}=y\right]  \nonumber\\
=\mathbb{E}\left[  \left.  g(X_{m_{1}},\ldots,X_{m_{k-1}},X_{n^{\ast}}%
^{0,x},X_{n_{1}},\ldots,X_{n_{l-1}})\right\vert \ X_{m_{0}}=x,\,X_{N}%
=y\right]  \nonumber\\
=\lim_{\epsilon\rightarrow0}\frac{\mathbb{E}\left[  g\left(  X_{m_{1}}%
^{m_{0},x},\ldots,X_{m_{k-1}}^{m_{0},x},X_{n^{\ast}}^{m_{0},x},Y_{\widehat
{n}_{l-1}}^{y;N},\ldots,Y_{\widehat{n}_{1}}^{y;N}\right)  K_{\epsilon}\left(
Y_{\widehat{n}_{l}}^{y;N}-X_{n^{\ast}}^{m_{0},x}\right)  \mathcal{Y}%
_{\widehat{n}_{l}}^{y;N}\right]  }{\mathbb{E}\left[  K_{\epsilon}\left(
Y_{\widehat{n}_{l}}^{y;N}-X_{n^{\ast}}^{m_{0},x}\right)  \mathcal{Y}%
_{\widehat{n}_{l}}^{y;N}\right]  }.\label{eq:for-rev-intro}%
\end{gather}

\end{theorem}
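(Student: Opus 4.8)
The plan is to reduce the claimed identity to the representation of Theorem~\ref{key} combined with a delta-function limit. The first equality in the statement is purely a relabeling: since $n^{\ast}=m_{k}$ and $n_{i}=m_{k+i}$, the tuple $(X_{m_{1}},\ldots,X_{m_{r}})$ is literally the same as $(X_{m_{1}},\ldots,X_{m_{k-1}},X_{n^{\ast}},X_{n_{1}},\ldots,X_{n_{l-1}})$, so only the second equality carries content.

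First I would write the conditional expectation as a ratio of Lebesgue integrals. By the Markov property, conditioning on $X_{m_{0}}=x$ and $X_{N}=y$, the conditional law of the intermediate grid values has unnormalized density equal to the product of the multi-step transition densities $p_{m_{j-1},m_{j}}$ between consecutive grid points along the full grid $m_{0}<m_{1}<\cdots<m_{k-1}<n^{\ast}<n_{1}<\cdots<n_{l-1}<N$, and the normalizing constant is $p_{m_{0},N}(x,y)$. Thus the left-hand side equals the integral of $g$ against this path density divided by $p_{m_{0},N}(x,y)$.

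Next I would evaluate the numerator and denominator of the right-hand side using that the forward chain $X^{m_{0},x}$ and the reverse pair $(Y^{y;N},\mathcal{Y}^{y;N})$ are independent, computing the expectation in two stages. Holding the forward trajectory fixed, in particular its endpoint $\xi:=X_{n^{\ast}}^{m_{0},x}$, I apply Theorem~\ref{key} to the inner expectation over $(Y,\mathcal{Y})$, choosing the test function $f(y_{0},\ldots,y_{l-1})=g(x_{m_{1}},\ldots,x_{m_{k-1}},\xi,y_{1},\ldots,y_{l-1})\,K_{\epsilon}(y_{0}-\xi)$ for the numerator and $f(y_{0},\ldots,y_{l-1})=K_{\epsilon}(y_{0}-\xi)$ for the denominator; the identification $\widehat{n}_{i}=n_{l}-n_{l-i}$ makes the $Y$-arguments match the grid in Theorem~\ref{key}. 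This turns each reverse expectation into $\int f\prod_{i=1}^{l}p_{n_{i-1},n_{i}}(y_{i-1},y_{i})\,dy_{i-1}$ with $y_{l}=y$. Taking then the outer expectation over the forward chain introduces the factor $\prod_{j=1}^{k}p_{m_{j-1},m_{j}}$, so that both numerator and denominator become explicit integrals in which $K_{\epsilon}(y_{0}-\xi)$ couples the forward endpoint $\xi$ (the state at $n^{\ast}=m_{k}$) to the reverse endpoint $y_{0}$ (the state at $n_{0}=n^{\ast}$).

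Finally I would pass to the limit $\epsilon\to0$. Since $K_{\epsilon}\to\delta_{0}$, the $y_{0}$-integration collapses onto $y_{0}=\xi$; by Chapman--Kolmogorov the denominator then integrates to $\int p_{m_{0},n^{\ast}}(x,\xi)\,p_{n^{\ast},N}(\xi,y)\,d\xi=p_{m_{0},N}(x,y)$, while the numerator becomes exactly the integral of $g$ against the full path density identified in the first step. Their ratio is the desired conditional expectation. The main obstacle is precisely this limit: one must justify interchanging $\lim_{\epsilon\to0}$ with the integrations, i.e.\ that convolution against $K_{\epsilon}$ recovers the point values of $p_{n^{\ast},N}(\cdot,y)$ and of $g(\ldots)\,p_{n^{\ast},N}(\cdot,y)$ at $y_{0}=\xi$. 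This requires mild regularity (continuity of the transition densities in the pinned argument, boundedness and continuity of $g$, and an integrable majorant permitting dominated convergence), together with $p_{m_{0},N}(x,y)>0$ so that the limiting denominator is nonzero; all remaining steps are routine applications of Theorem~\ref{key} and Chapman--Kolmogorov.
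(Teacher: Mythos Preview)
Your proposal is correct and follows essentially the same route as the paper: apply Theorem~\ref{key} to the test function $f(y_{0},\ldots,y_{l-1})=g(X_{m_{1}}^{0,x},\ldots,X_{n^{\ast}}^{0,x},y_{1},\ldots,y_{l-1})K_{\epsilon}(y_{0}-X_{n^{\ast}}^{0,x})$ conditionally on the forward trajectory, send $\epsilon\to0$, and identify the denominator as $p_{m_{0},N}(x,y)$. Your write-up is in fact more detailed than the paper's own sketch, which simply refers to \cite{BS} for the analogous argument and omits the regularity discussion you supply.
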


\begin{proof}
The proof is analogous to the corresponding one in \cite{BS}. As a
rough sketch, apply Theorem \ref{key} to%
\[
f(X_{m_{1}}^{0,x},\ldots,X_{n^{\ast}}^{0,x},y_{0},y_{1},\ldots,y_{l-1}%
):=g(X_{m_{1}}^{0,x},\ldots,X_{n^{\ast}}^{0,x},y_{1},\ldots,y_{l-1}%
)K_{\epsilon}(y_{0}-X_{n^{\ast}}^{0,x}),
\]
conditional on $X_{m_{1}}^{0,x},\ldots,X_{n^{\ast}}^{0,x},$ send
$\epsilon\rightarrow0,$ and divide the result by
\[
p_{0,N}(x,y)=\lim_{\epsilon\rightarrow0}\mathbb{E}\left[  K_{\epsilon}\left(
Y_{\widehat{n}_{l}}-X_{n^{\ast}}\right)  \mathcal{Y}_{\widehat{n}_{l}}\right].
\qedhere 
\]

\end{proof}

\subsection*{Forward-Reverse algorithm}\label{algo} 
Given Theorem~\ref{FRR} the corresponding forward-reverse Monte Carlo
estimator for (\ref{eq:for-rev-intro}) suggests itself: Sample i.i.d. copies
$X^{0,x,(1)},...,X^{0,x,(M)}$ of the process $X^{0,x}$ and, independently,
i.i.d. copies $\left(  Y^{y;N,(1)},\mathcal{Y}^{y;N,(\widetilde{M})}\right)
,...,\left(  Y^{y;N,(1)},\mathcal{Y}^{y;N,(\widetilde{M})}\right)  $ of the
process $\left(  Y^{y;N},\mathcal{Y}^{y;N}\right)  .$ Take for $K$ a second
order kernel, take for simplicity $M=\widetilde{M},$ and choose a bandwidth
$\epsilon_{M}\sim M^{-1/d}$ if $d\leq4,$ or $\epsilon_{M}\sim M^{-2/(4+d)}$ if
$d\geq4.$ By next replacing the expectations in the numerator and denominator
of (\ref{eq:for-rev-intro}) by their respective Monte Carlo estimates involving double sums, one
ends up with an estimator with Root-Mean-Square error $O(M^{-1/2})$ in the
case $d\leq4$ and $O(M^{-4/(4+d)})$ in the case $d>4$ (cf. \cite{BS} for
details).

\section{The forward-reverse EM algorithm}\label{sec4}

Let us now formulate the forward-reverse EM (FREM) algorithm in the setting of the missing data problem. Suppose that the parameter $\theta \in \Theta \subset \mathbb{R}^s$ and that the Markov chain $X=(X_n,n\in \mathbb{N})$ has state space $\mathbb{R}^d$. Assuming that the transition densities $p_{n,k}$ of $X$ exist for $n,k \in \mathbb{N}$ the full-data $\log$-likelihood then reads
\begin{equation}
 l_c(\theta,x)=\sum_{i=0}^{n_N-1}\log p_{i,i+1}^{\theta
}(x_{i},x_{i+1}), \quad x \in \mathbb{R}^{n_N+1}.
\end{equation}
In the missing data problem only partial observations $X_{n_0},\ldots, X_{n_N}$ are available for $0=n_0 <n_1< \ldots < n_N$ with log-likelihood function $l$ given in \eqref{lh} that is intractable in most cases. Instead, the maximization of $l$ in $\theta$ has to be replaced by a two step iterative procedure, the EM algorithm.
\begin{description}
 \item[E-step] In the $m$-th step evaluate the conditional expectation of the complete data $\log$-likelihood
 \[
  Q(\theta, \theta_m,x):= \mathbb{E}_{\theta_m} [l_c(\theta,X_0,\ldots,X_{n_N}) | X_{n_0}= x_{n_0},\dots,X_{n_N}=x_{n_N}], \quad x \in \mathbb{R}^{N+1}.
 \]
  \item[M-step] Update the parameter by
  \[
   \theta_{m+1} = \arg \max_\theta Q(\theta,\theta_m,X).
  \]
\end{description}
Since in many Markov chain models the E-step is intractable in this form, we propose a forward-reverse approximation for the expectation of the transition densities evaluated at the observations.
\begin{description}
 \item[FR E-step] Evaluate
 \begin{equation}
  Q_m(\theta, \theta_m,X):= \mathbb{E}_{\theta_m}^{FR} [l_c (\theta,X) | X_{n_i}, i = 0, \ldots,N],
 \end{equation}
where $\mathbb{E}_{\theta_m}^{FR}$ denotes a forward-reverse approximation of the conditional expectation under the parameter $\theta_m$.
\end{description}
After this FR E-step is computed the M-step remains unchanged. This FREM algorithm gives a random sequence $(\theta_n)_{n \geq 0}$ that under certain conditions given in the next section converges to stationary points of the likelihood function. To assure a.s. boundedness of this sequence we apply a stabilization technique introduce in \cite{chen1988}.

\subsubsection*{The stable FREM algorithm}

Let $K_m \subset \Theta$ for $m \in \mathbb{N}$ be a sequence of compact sets such that
\begin{equation}\label{def:Kn}
 K_m \subsetneq K_{m+1} \quad \text{and} \quad \Theta = \bigcup_{m \in \mathbb{N}} K_m
\end{equation}
for all $m \in \mathbb{N}$. We define the stable FREM algorithm by checking if $\theta_m$ after the $m$-th maximization step lies in $K_m$ and reseting the algorithm otherwise. Choose a starting value $\theta_0 \in K_0$ and let $p_n$ for $n \in \mathbb{N}$, $p_0 :=0$, count the number of resets.
\begin{description}
 \item[stable M-step]
 \begin{align}
  \theta_{m+1} &= \arg \max_\theta Q_m (\theta,\theta_m,X) \text{ and } p_{n+1} = p_n,& \quad \text{ if } \arg \max_\theta Q_m (\theta,\theta_m,X) \in K_m,\\
  \theta_{m+1} &= \theta_0 \text{ and } p_{n+1} = p_n+ 1,& \quad \text{ if } \arg \max_\theta Q_m (\theta,\theta_m,X) \notin K_m.
 \end{align}
\end{description}
We will show in the next section that under weak assumption the number of resets $p_n$ stays a.s. finite. Our stable FREM algorithm consists now of iteratively repeating the FR E-step and the stable M-step.

\section{Almost sure convergence of the FREM algorithm} \label{sec:conv}

In this section we prove almost sure convergence of the stable FREM algorithm under the assumption that the complete data likelihood is from a curved exponential family. Our proof is mainly based on results from \cite{BS}, \cite{fort} and the classical framework for the EM algorithm introduced in \cite{dempster} and \cite{lange}.

\subsection{Model setting}
Suppose that $\phi:\Theta \to \mathbb{R}$, $\psi: \Theta \to \mathbb{R}^q$ and $S:\mathbb{R}^{(N+1)d} \to \mathbb{R}^q$ are continuous functions. We make the structural assumption that the full data log-likelihood is of the form
\begin{equation}
 l(\theta,x_0, \ldots, x_N) = \phi(\theta)+ \langle S(x_0,\ldots,x_N),\psi(\theta)\rangle,
 \end{equation}
i.e. $l$ is from a curved exponential family. In order to proof convergence we need the following properties to be fulfilled that naturally hold in many popular models. In Section \ref{sec:impl-forrev} we give several practical  examples that fall into this setting.
\begin{assumption}\label{ass1}
 \begin{enumerate}
  \item There exists a continuous function $\bar \theta: \mathbb{R}^q \to \Theta$ such that $l(\bar \theta(s),s) = \sup_{\theta \in \Theta} l(\theta,s)$ for all $s \in \mathbb{R}^{(N+1)d}$.
  \item The incomplete data likelihood $L$ is continuous in $\theta$, and the level sets $\{ \theta \in \Theta | L(
  \theta,x) \geq C \}$ are compact for any $C > 0$ and all $x$.
  \item The conditional expectation $\mathbb{E}_\theta [S(X_0, \ldots, X_{n_N})| X_{n_0}= x_{n_0},\dots,X_{n_N}=x_{n_N}]$ exists for all $(x_{n_0},\ldots, x_{n_N}) \in \mathbb{R}^{N+1}$ and $\theta \in \Theta$ and is continuous on $\Theta$.
 \end{enumerate}
\end{assumption}

To simplify our notation we will neglect in the following the dependence of
$l$ on $s$. Under these assumption we can separate the E- and M-step. In order to do so we define
\[
 g(\theta):= \mathbb{E}_\theta [S(X_0, \ldots, X_{n_N})| X_{n_0}= x_{n_0},\dots,X_{n_N}=x_{n_N}].
\]
An iteration of the EM algorithm can now be written as $\theta_{m+1} = \bar{\theta} \circ g(\theta_m) =:T(\theta_m)$. Let us denote by $\Gamma$ the set of stationary points of the EM algorithm, i.e.
\[
 \Gamma = \{\theta \in \Theta | \bar \theta \circ g(\theta) = \theta \}.
\]
It was shown in Theorem 2 in \cite{wu1983} that if $\Theta$ is open, $\phi$ and $\psi$ are differentiable and Assumption \ref{ass1} holds, then
\[
 \Gamma = \{\theta \in \Theta | \partial_\theta l(\theta) =0 \},
\]
such that the fixed points of the EM algorithm coincide with the stationary
points of $l$. In \cite{wu1983} it was proved that the set $\Gamma$ contains
all limit points of $(\theta_n)$ and that $(l(\theta_n))$ converges to
$l(\theta_0)$ for some $\theta_0 \in \Gamma$. In the following theorem we
extend these results to the FREM algorithm. Let $d(x,A)$ be the distance
between a point $x$ and a set $A$.

For the convergence of our forward-reverse based EM algorithm, we naturally
also need to guarantee convergence of the corresponding forward-reverse
estimators. This can be guaranteed by the following assumption (cf. also \cite[Section 4]{BS}).
\begin{assumption}
  \label{ass2}
  \begin{enumerate}
  \item For any multi-indices $\alpha,\beta \in \mathbb{N}_0^d$ with $|\alpha|
    + |\beta| \le 2$ and any index $i$ there are constants $C_1 =
    C_1(i,\alpha,\beta), C_2 = C_2(i, \alpha,\beta) > 0$ such
    that
    \begin{equation*}
      | \partial_x^\alpha \partial_y^\beta p_{i}(x,y) | \le C_1 \exp\left( -
        C_2 |x-y|^2 \right).
    \end{equation*}
  \item $S$ is twice differentiable in its arguments and both $S$ and its
    first and second derivatives are polynomially bounded.
  \end{enumerate}
\end{assumption}
Now we are ready to state as the main result of this section a general convergence theorem for the FREM algorithm.
\begin{theorem}\label{thm:FREMconv}
  Let $(K_n)_{n \in \mathbb{N}}$ satisfy \eqref{def:Kn} and choose $\theta_0
  \in K_0$. Suppose that Assumptions \ref{ass1} and \ref{ass2} hold and that
  $l(\Gamma)$ is compact, then the stable FREM random sequence $(\theta_n)_{n
    \geq 0}$ has the following properties:
 \begin{enumerate}
  \item $\lim_n p_n < \infty$ a.s. and $(\theta_n)$ is almost surely bounded.
  \item $\lim_n d(l(\theta_n),l(\Gamma))= 0$ almost surely.
  \item If also $l$ is $s$-times differentiable, then $\lim_n d(\theta_n,\Gamma)= 0$ almost surely.
 \end{enumerate}
\end{theorem}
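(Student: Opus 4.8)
The plan is to realize the stabilized FREM recursion as a perturbed version of the deterministic EM map $T = \bar\theta \circ g$ and then to invoke the general convergence machinery for stabilized stochastic approximations from \cite{fort} and \cite{chen1988}. Writing the FR E-step output as $g_m(\theta_m) = g(\theta_m) + \xi_m$, the stable M-step becomes $\theta_{m+1} = T(\theta_m) + r_m$ up to the truncation onto $K_m$, where $r_m = \bar\theta(g(\theta_m) + \xi_m) - \bar\theta(g(\theta_m))$ is the error induced by the forward-reverse approximation. The first task is to record the Lyapunov structure: by the fundamental EM inequality (cf.\ \cite{dempster}, \cite{lange}) the observed-data log-likelihood $l$ satisfies $l(T(\theta)) \ge l(\theta)$ with equality exactly on $\Gamma$, and by \cite{wu1983} together with Assumption \ref{ass1} the set $\Gamma$ coincides with the stationary points of $l$. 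Thus $l$ serves as a strict Lyapunov function for $T$, and the continuity of $\bar\theta$ and of $g$ (Assumption \ref{ass1}(3)) makes $T$ continuous.

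The second, and central, task is to control the perturbation $\xi_m$. Here I would use the convergence analysis of the forward-reverse estimator in \cite[Section 4]{BS}, adapted to the Markov-chain representation of Theorem \ref{FRR}. Assumption \ref{ass2}(1), the Gaussian-type tail bound on the one-step densities and their derivatives, guarantees that the reverse weights $\mathcal{Y}$ and the kernel-smoothed ratio estimator have bias and variance that are controlled uniformly over $\theta$ in any compact $K_m$; Assumption \ref{ass2}(2), the polynomial growth of $S$ and its derivatives, ensures that the functional $g$ and its FR estimate are well defined and inherit these bounds. Choosing the number of forward and reverse trajectories and the bandwidth at iteration $m$ as prescribed after Theorem \ref{FRR}, so that the root-mean-square error decays polynomially in $M_m$, I would show that $\xi_m \to 0$ fast enough that the error sequence satisfies the vanishing-perturbation and summability hypotheses required by the stabilized convergence theorem. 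This is the step I expect to be the main obstacle: the FR estimator is a ratio of double Monte Carlo sums with a vanishing-bandwidth kernel, so its error decomposes into bias and variance contributions, and turning the pointwise bounds of \cite{BS} into the uniform-in-$\theta$, almost-sure control needed to feed into \cite{fort} is the delicate part. Assumption \ref{ass2} is tailored precisely to supply these uniform tail and moment estimates.

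With the Lyapunov property, the continuity of $T$, the compactness of $l(\Gamma)$, and the vanishing perturbation in hand, I would apply the stabilized-EM convergence theorem of \cite{fort}, whose truncation mechanism is exactly the reset rule of \cite{chen1988} built into the stable M-step. For part (1), the theorem yields that the reset counter $p_n$ stays almost surely finite: once $\xi_m$ is small the Lyapunov ascent confines the iterates to a fixed level set, which lies in some $K_M$, so no further resets occur, and hence $(\theta_n)$ is almost surely bounded. For part (2), the same theorem gives $d(l(\theta_n), l(\Gamma)) \to 0$ almost surely, the perturbed analogue of the monotone convergence of $l(\theta_n)$ in the exact EM algorithm. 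For part (3), I would combine part (2) with boundedness and the extra differentiability hypothesis: by \cite{wu1983} the accumulation points of $(\theta_n)$ lie in $\Gamma$, and since $l$ is continuous with $l(\theta_n) \to l(\Gamma)$, a standard LaSalle-type argument upgrades convergence of the likelihood values to $d(\theta_n, \Gamma) \to 0$ almost surely.
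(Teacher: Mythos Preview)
Your strategy coincides with the paper's: write the stable FREM iteration as a perturbation of the deterministic EM map $T=\bar\theta\circ g$, use the observed-data log-likelihood $l$ as a Lyapunov function for $T$ (the paper cites Lemma~2 of \cite{delyon} for this), control the forward-reverse error in mean square via \cite[Theorem~4.18]{BS} under Assumption~\ref{ass2}, and feed the result into Propositions~9 and~11 of \cite{fort}. The paper is more explicit than your sketch about the summability mechanism---Borel--Cantelli, uniform continuity of $l\circ\bar\theta$ on a neighbourhood of $g(K)$, Markov's inequality, and the choice of an \emph{increasing} sample size $N_m>m$ so that the MSE bound $C/N_m$ becomes summable---but your outline with the growing $M_m$ captures the idea.

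One point where your sketch does not match the paper is part~(3). A LaSalle-type argument alone does not close the gap between $d(l(\theta_n),l(\Gamma))\to0$ and $d(\theta_n,\Gamma)\to0$: knowing only that likelihood values approach $l(\Gamma)$ and that accumulation points lie in $\Gamma$ does not preclude $(\theta_n)$ wandering along a level set of $l$. What is needed is that the critical-value set $l(\Gamma)\subset\mathbb{R}$ have empty interior, and this is precisely where the paper uses the $s$-times differentiability hypothesis, via Sard's theorem (here $s=\dim\Theta$ is exactly the regularity Sard requires for a map $\Theta\to\mathbb{R}$). Sard gives $l(\Gamma)$ Lebesgue measure zero, which combined with the assumed compactness supplies the structural hypothesis that Proposition~9 of \cite{fort} needs to upgrade convergence of Lyapunov values to convergence to $\Gamma$. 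Your instinct to invoke the extra differentiability is right, but the mechanism is Sard, not LaSalle.
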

\begin{proof}
 (1) Set
 \[
  g^{FR}(\theta) := \mathbb{E}_\theta^{FR} [S(X_0, \ldots, X_{n_N})| X_{n_0}= x_{n_0},\dots,X_{n_N}=x_{n_N}].
 \]
With the above notation an iteration of the FREM algorithm can be written as
\[
 \theta_{m+1} =  \bar \theta (g^{FR} (\theta_m)).
\]
 It was shown in Lemma 2 in \cite{delyon} that the incomplete data $\log$-likelihood $l$ is a natural Lyapunov function relative to $T$ and to the set of fixed points $\Gamma$.  If for any $\epsilon >0$ and compact $K \subset \Theta$ we have
 \begin{equation}\label{Tconv}
  \sum_m \mathbf{1}_{|l\circ \bar \theta (g^{FR} (\theta_m))- l \circ T(\theta_m)|\mathbf{1}_{\theta_m \in K}\geq \epsilon} < \infty \quad \text{ a.s.},
 \end{equation}
then Proposition 11 in \cite{fort} implies in our setting that $\lim \sup_n p_n < \infty$ almost surely and that $\theta_n$ is a compact sequence such that (1) follows. To obtain \eqref{Tconv} it is sufficient by Borel-Cantelli to prove that
 \[
   \sum_m P \left( |l\circ \bar \theta (g^{FR} (\theta_m))- l \circ T(\theta_m)|\mathbf{1}_{\theta_m \in K}\geq \epsilon\right) < \infty.
 \]
Define for any $\delta >0$ an $\epsilon$-neighborhood of $K$ by
\[
 K_\delta := \{ x \in \mathbb{R}^q |\inf_{z \in K} |z-x| \leq \delta \}.
\]
By assumption $l$ and $\bar \theta$ are continuous, such that for any $\delta >0$ there exists $\eta >0$ such that for any $x,y \in K_\delta$ we have $|l\circ \bar \theta (x) - l \circ \bar \theta (y)| \leq \epsilon$ whenever $|x-y| \leq \eta$.

Choosing now $\bar \epsilon = \delta \wedge \eta$ yields
\begin{align*}
 P \left( |l\circ \bar \theta (g^{FR} (\theta_m))- l \circ T(\theta_m)|\mathbf{1}_{\theta_m \in K}\geq \epsilon\right) = P \left( |l\circ \bar \theta (g^{FR} (\theta_m))- l \circ \bar \theta (g(\theta_m))|\mathbf{1}_{\theta_m \in K}\geq \epsilon\right)\\
 = P \left( |l\circ \bar \theta (g^{FR} (\theta_m))- l \circ \bar \theta (g(\theta_m))|\mathbf{1}_{\theta_m \in K}\geq \epsilon  ,|g^{FR} (\theta_m) - g(\theta_m)|\mathbf{1}_{\theta_m \in K} \leq \delta \right)\\
 + P \left( |l\circ \bar \theta (g^{FR} (\theta_m))- l \circ \bar \theta (g(\theta_m))|\mathbf{1}_{\theta_m \in K}\geq \epsilon  ,|g^{FR} (\theta_m) - g(\theta_m)|\mathbf{1}_{\theta_m \in K} \geq \delta \right)\\
  \leq 2 P\left( |g^{FR} (\theta_m) - g(\theta_m)|\mathbf{1}_{\theta_m \in K} \geq \bar \epsilon \right)
\end{align*}
Markov's inequality gives then
\[
 P \left( |l\circ \bar \theta (g^{FR} (\theta_m))- l \circ T(\theta_m)|\mathbf{1}_{\theta_m \in K}\geq \epsilon\right) \leq 2 \epsilon^{-k} \mathbb{E} \left[
  \left|g^{FR} (\theta_m) - g(\theta_m) \right|^k \mathbf{1}_{\theta_m \in K} \right]
\]
for some $k >0$.

By \cite[Theorem 4.18]{BS} (see also Remark~\ref{rem:FREM-convergence} below),
we can always choose a number $N$ of samples for the forward-reverse algorithm
and a corresponding bandwidth $\epsilon = \epsilon_N = N^{-\alpha}$ such that
\begin{equation*}
  \mathbb{E} \left[
  \left|g^{FR} (\theta_m) - g(\theta_m) \right|^2 \mathbf{1}_{\theta_m \in K}
\right] \le \f{C}{N}
\end{equation*}
for some constant $C$.
We note that the choice of $\alpha$ depends on the
dimension $d$ as well as on the \emph{order} of the kernel. For instance, for
$d \le 4$ and a standard first order accurate kernel $K$, we can choose any
$1/4 \le \alpha \le 1/d$. 

In any case, if we choose $N > m$ then
\begin{equation}\label{FRconv}
   \sum_m P \left( |l\circ \bar \theta (g^{FR} (\theta_m))- l \circ T(\theta_m)|\mathbf{1}_{\theta_m \in K}\geq \epsilon\right)< \infty,
\end{equation}
which proves (1).

To prove (2) and (3) observe that for every $K \subset \Theta$ we have
 \[
  \lim_m | l(\theta_{m+1}) - l \circ T( \theta_m) | \mathbf{1}_{\theta_m \in K} =0 \quad a.s. 
 \]
By Borel-Cantelli it is sufficient to prove that
\[
 \sum_m P (| l(\theta_{m+1}) - l \circ T( \theta_m) | \mathbf{1}_{\theta_m \in K} \geq \epsilon ) < \infty.
\]
But since we have shown in (1) that $p_n$ is finite a.s., we have in the above sum that $\theta_{m+1} = \bar \theta (g^{FR} (\theta_m))$ in almost all summands. Hence, it is sufficient to show that
\[
 \sum_m P (| l(\bar \theta (g^{FR} (\theta_m))) - l \circ T( \theta_m) | \mathbf{1}_{\theta_m \in K} \geq \epsilon ) < \infty,
\]
 which is nothing else than \eqref{FRconv}. The statement of (2) and (3) follows now from Sard's theorem (cf. \cite{broeckner}) and Proposition 9 in \cite{fort}.
\end{proof}

\begin{remark}
  \label{rem:FREM-convergence}
  In the above convergence proof we need to rely on the convergence proof of
  the forward-reverse estimator when the bandwidth tends to zero and the
  number of simulated Monte Carlo samples tends to infinity. Such a proof is
  carried out for the diffusion case in \cite[Theorem 4.18]{BS}, where also
  rates of convergence are given. We note that the proof only relies on the
  transition densities of (a discrete skeleton of) the underlying diffusion
  process. Hence, it immediately carries over to the present setting.
\end{remark}
Theorem \ref{thm:FREMconv} is a general convergence statement that links the limiting points of the FREM sequence to the set of stationary points of $l$. In many concrete models the set $\Gamma$ of stationary points consists of isolated points only such that an analysis of the Hessian of $l$ gives conditions for local maxima. A more detailed discussion in this direction can be found in \cite{delyon} for example.

\section{Implementation and complexity of the FREM algorithm}
\label{sec:impl-forrev}

Before presenting two concrete numerical examples, we will first discuss
general aspects of the implementation of the forward-reverse EM algorithm. For
this purpose, let us, for simplicity, assume that the Markov chains $X$ and
$(Y, \mathcal{Y})$ are time-homogeneous, i.e., that $p \equiv p_k$ and $q
\equiv q_k$ do not depend on time $k$. We assume that we observe the Markov
process $X$ at times $0 = i_0 < \cdots < i_r = N$, i.e., our data consist of
the values $X_{i_k} = x_{i_k}$, $k=0, \ldots, r$. For later use, we introduce
the shortcut-notation $\mathbf{x} := (x_{i_j})_{j=0}^r$.

The law of $X$ depends on an $s$-dimensional parameter $\theta \in \R^s$,
which we are trying to estimate, i.e., $p = p^\theta$. To this end, let
\begin{equation*}
  \ell(\theta; x_0, \ldots, x_N) := \sum_{i=1}^N \log p^\theta(x_{i-1}, x_i)
\end{equation*}
denote the log-likelihood function for the estimation problem assuming full
observation. As before, we make the structural assumption that
\begin{equation}
  \label{eq:structural}
  \ell(\theta; x_0, \ldots, x_N) = \phi(\theta) + \sum_{i=1}^n S_i(x_0,
  \ldots, x_N) \psi_i(\theta).
\end{equation}
For simplicity, we further assume that there are functions $S_i^j$ such that
\begin{equation*}
  S_i(x_0, \ldots, x_N) = \sum_{j=1}^r S^j_i(x_{i_{j-1}}, \ldots, x_{i_j}).
\end{equation*}
The structural assumption~(\ref{eq:structural}) allows us to effectively
evaluate the conditional expectation of the log-likelihood $\ell_c$ for
different parameters $\theta$, without having to re-compute the conditional
expectations. More precisely, recall that for a given guess
$\widetilde{\theta}$ the E step of the EM algorithm consists in calculating
the function
\begin{equation}
  \label{eq:Q-function-again}
  \theta \mapsto Q(\theta; \ttheta, \mathbf{x}) := \E_{\ttheta}\left[
    \left. \ell_c\left( \theta; X_0, \ldots, X_N) \right) \right| X_{i_j} =
    x_{i_j}, \ j=0, \ldots, r \right],
\end{equation}
with $\E_{\ttheta}$ denoting (conditional) expectation under the parameter
$\ttheta$. Inserting the structural assumption~(\ref{eq:structural}), we
immediately obtain
\begin{equation*}
  Q(\theta; \ttheta, \mathbf{x}) = \phi(\theta) + \sum_{i=1}^m \psi_i(\theta)
  \E_{\ttheta}\left[ 
    \left. S_i(X_0, \ldots, X_N) \right| X_{i_j} =
    x_{i_j}, \ j=0, \ldots, r \right] = \phi(\theta) + \sum_{i=1}^m
  S_i(\theta) z_i^{\ttheta} 
\end{equation*}
with $z_i^{\ttheta} := \E_{\ttheta}\left[ \left. S_i(X_0, \ldots, X_N) \right|
  X_{i_j} = x_{i_j}, \ j=0, \ldots, r \right]$, $i=1, \ldots, m$. Note that
the definition of $z_i^{\ttheta}$ does not depend on the free parameter
$\theta$. Thus, only one (expensive) round of calculations of conditional
expectations is needed for a given $\ttheta$, producing a cheap-to-evaluate
function in $\theta$, which can then be fed into any maximization algorithm.

For any given $\ttheta$, the calculation of the numbers $z_1^{\ttheta},
\ldots, z_m^{\ttheta}$ requires running the forward-reverse algorithm for
conditional expectations. More precisely, using the Markov property we decompose
\begin{multline*}
  z_i^{\ttheta} := \E_{\ttheta}\left[ \left. S_i(X_0, \ldots, X_N) \right|
    X_{i_j} = x_{i_j}, \ j=0, \ldots, r \right] \\
  = \sum_{j=1}^r \E_{\ttheta}\left[ \left. S_i^j(X_{i_{j-1}}, \ldots, X_{i_j})
    \right| X_{i_{j-1}} = x_{i_{j-1}}, X_{i_j} = x_{i_j} \right].
\end{multline*}
All these conditional expectations are of the Markov-bridge type for which the
forward-reverse algorithm is designed. Hence, for each iteration of the EM
algorithm, we apply the forward-reverse algorithm $r$ times, one for the
time-intervals $i_{j-1}, \ldots, i_j$, $j=1, \ldots, r$, evaluating all the
functionals $h_1^j, \ldots, h_m^j$ at one go.

\subsection{Choosing the reverse process}
\label{sec:choos-reverse-proc}

Recall the defining equation for the one-step transition density $q$ of the
reverse process given in~\eqref{varfi}. For simplicity, we shall again assume
that the forward and the reverse processes are time-homogeneous, implying
that~\eqref{varfi} can be re-expressed as
\begin{equation*}
  q(y,z) = \frac{p(z,y)}{\psi(y,z)}.
\end{equation*}
Notice that in this equation only $p$ is given a-priori, i.e., the user is
free to choose any re-normalization $\psi$ provided that for any $y \in \R^d$
the resulting function $z \mapsto q(y,z)$ is non-negative and integrates to
$1$. In particular, we can turn the equation around, choose \emph{any}
transition density $q$ and \emph{define}
\begin{equation*}
  \psi(y,z) := \frac{p(z,y)}{q(y,z)}.
\end{equation*}
Note, however, that for the resulting forward-reverse process square
integrability of the process $\mathcal{Y}$ is desirable. More precisely, only
square integrability of the (numerator of the) complete estimator
corresponding to (\ref{eq:for-rev-intro}) is required, but it seems
far-fetched to hope for any cancellations giving square integrable estimators
when $\mathcal{Y}$ itself is not square integrable. From a practical point of
view, it therefore seems reasonable to aim for functions $\psi$ satisfying
\begin{equation*}
  \psi \approx 1
\end{equation*}
in the sense that $\psi$ is bounded from above by a number
slightly smaller than $1$ and bounded from below by a number slightly smaller
than $1$. Indeed, note that $\mathcal{Y}$ is obtained by multiplying terms of
the form $\psi(Y_n, Y_{n+1})$ along the whole trajectory of the reverse
process $Y$. Hence, if $\psi$ is bounded by a large constant, $\mathcal{Y}$
could easily take extremely large values, to the extent that buffer-overflow
might occur in the numerical implementation -- think of multiplying $100$
numbers of order $100$. On the other hand, if $\psi$ is considerably smaller
than $1$, $\mathcal{Y}$ might take very small values, which can cause problems
in particular taking into account the division by the forward-reverse
estimator for the transition density in the denominator of the forward-reverse
estimator.

Heuristically, the following procedure seems promising.
\begin{itemize}
\item If $y \mapsto \int_{\R^d} p(z,y) dz$ can be computed in closed form (or
  so fast that one can think of a closed formula), then choose
  \begin{equation*}
    \psi(y) := \psi(y,z) = \int_{\R^d} p(z,y) dz.
  \end{equation*}
\item Otherwise, assume that we can find a non-negative (measurable) function
  $\widetilde{p}(z,y)$ with closed form expression for $\int_{\R^d}
  \widetilde{p}(z,y) dz$ such that $p(z,y) \approx \widetilde{p}(z,y)$. Then
  define
  \begin{equation*}
    q(y,z) := \frac{\widetilde{p}(z,y)}{\int_{\R^d} \widetilde{p}(z,y) dz},
  \end{equation*}
  which is a density in $z$. By construction, we have
  \begin{equation*}
    \psi(y,z) = \frac{p(z,y)}{q(y,z)} = \int_{\R^d} \widetilde{p}(z,y) dz
    \, \frac{p(z,y)}{\widetilde{p}(z,y)},
  \end{equation*}
  implying that we are (almost) back in the first situation.
\end{itemize}

\begin{remark}
  Even if we can, indeed, explicitly compute $\psi(y,z) = \int_{\R^d} p(z,y)
  dz$, there is generally no guarantee that $\mathcal{Y}$ has (non-exploding)
  finite second moments. However, in practice, this case seems to be much
  easier to control and analyze.
\end{remark}

\subsection{Complexity of the forward-reverse algorithm}
\label{sec:compl-forw-reverse}

We end this general discussion of the forward-reverse EM algorithm by a
refined analysis of the complexity of the forward-reverse algorithm for
conditional expectations as compared to \cite{BS}. We start with an auxiliary
lemma concerning the maximum product of numbers of two species of balls in
bins, which is an easy consequence of a result by Gonnet~\cite{gon81}, see
also~\cite[Section 8.4]{sed}.
\begin{lemma}
  \label{lem:ball-bin}
  Let $X$ be a random variable supported in a compact set $D \subset \R^d$
  with a uniformly bounded density $p$. For any $K \in \N$ construct a
  partition $B_1^K, \ldots, B_K^K$ of $D$ in measurable sets of equal Lebesgue
  measure $\lambda(B^K_i) = \lambda(D)/K$, $i = 1, \ldots, K$. Finally, for
  given $N \in \N$ let $X_1, \ldots, X_N$ be a sequence of independent copies
  of $X$ and define
  \begin{equation*}
    N_k \coloneqq \# \left\{\left.i \in \set{1, \ldots, N} \,\right| \, X_i
      \in B^K_k \right\}, \quad k = 1, \ldots, K.
  \end{equation*}
  For $N, K \to \infty$ such that $N = \mathcal{O}(K)$ we have the asymptotic
  relation
  \begin{equation*}
    \mathbb{E} \left[ \max_{k=1, \ldots, K} N_k \right] = \mathcal{O}\left(
      \f{\log N}{\log \log N} \right).
  \end{equation*}
\end{lemma}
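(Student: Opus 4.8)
The plan is to reduce the non-uniform case to the classical uniform balls-in-bins result of Gonnet, using as the only feature of the density $p$ its uniform bound. Writing $\norm{p}_\infty =: M < \infty$, the probability that a single sample lands in bin $B_k^K$ is $\pi_k := \int_{B_k^K} p(x)\,dx \le M\lambda(B_k^K) = M\lambda(D)/K =: c/K$, where $c := M\lambda(D)\ge 1$ (the last inequality because $1 = \int_D p \le M\lambda(D)$) is independent of $k$ and $K$. Thus every bin is hit with probability at most $c/K$, which is the crucial uniform control.

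To exploit this I would set up a coupling by rejection sampling. Let $U_1, U_2, \ldots$ be i.i.d.\ uniform on $D$, so that each $U_j$ lands in $B_k^K$ with probability exactly $1/K$ (equal-measure bins), and accept $U_j$ with probability $p(U_j)/M \in [0,1]$. The accepted proposals are i.i.d.\ with density $p$; letting $T$ be the random number of proposals needed to collect $N$ acceptances, we identify $X_1, \ldots, X_N$ with those $N$ accepted proposals. Since the accepted samples form a subset of $\set{U_1, \ldots, U_T}$, for every bin $N_k \le \widetilde N_k(T)$, where $\widetilde N_k(m) := \#\set{j \le m : U_j \in B_k^K}$ counts \emph{all} uniform proposals among the first $m$. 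Consequently $\max_k N_k \le \max_k \widetilde N_k(T)$, and since $m \mapsto \widetilde N_k(m)$ is nondecreasing, $\max_k \widetilde N_k(T) \le \max_k \widetilde N_k(\floor{2cN})$ on the event $\set{T \le 2cN}$.

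It then remains to control two pieces. First, on $\set{T \le 2cN}$ we are exactly in the uniform setting with $m = \floor{2cN}$ balls thrown into $K$ equal bins, and here $m = \mathcal{O}(K)$ by the hypothesis $N = \mathcal{O}(K)$; Gonnet's result \cite{gon81} (see also \cite[Section 8.4]{sed}) then gives $\E[\max_k \widetilde N_k(\floor{2cN})] = \mathcal{O}(\log N/\log\log N)$, using $\log m \sim \log N$ and $\log\log m \sim \log\log N$. Second, since the acceptance probability $1/c$ is a fixed constant, $T$ is a sum of $N$ i.i.d.\ geometric waiting times with mean $cN$, so by a standard Chernoff bound $P(T > 2cN)$ is exponentially small in $N$; on this rare event we use the trivial bound $\max_k N_k \le N$, whence its contribution to the expectation is at most $N\,P(T>2cN)\to 0$. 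Combining the two pieces yields the claim.

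I expect the only genuine technical point to be the handling of the random proposal count $T$: one must verify that $T$ concentrates around $cN$ and that its exponentially small overshoot probability dominates the crude bound $N$ on the bad event. Everything else is either the elementary estimate $\pi_k \le c/K$ or a direct appeal to Gonnet. As a self-contained alternative avoiding the coupling, one could instead note that $N_k \sim \mathrm{Binomial}(N,\pi_k)$ with $N\pi_k \le Nc/K = \mathcal{O}(1)$ (again using $N = \mathcal{O}(K)$), bound the tail by $P(N_k \ge t) \le (eN\pi_k/t)^t \le (ec/t)^t$, apply a union bound over the $K$ bins, and calibrate a threshold $t^\ast \asymp \log N/\log\log N$ so that $K(ec/t^\ast)^{t^\ast}$ is summable; the expectation then follows from $\E[\max_k N_k] = \sum_{t\ge 1} P(\max_k N_k \ge t) \le t^\ast + \sum_{t>t^\ast} K(ec/t)^t = \mathcal{O}(\log N/\log\log N)$. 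In this second route the calibration of $t^\ast$ and the tail-sum estimate are the steps requiring the most care.
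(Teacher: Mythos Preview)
Your proposal is correct and, like the paper, reduces to Gonnet's uniform balls-in-bins result, but the reduction itself is genuinely different. The paper fixes the number $N$ of balls and \emph{decreases the number of bins}: it sets $K' := \lfloor K/\norm{p}_\infty \rfloor$, takes $(M_1,\ldots,M_{K'})$ to be multinomial with $N$ balls in $K'$ equally likely bins, asserts that $\E[\max_k N_k] \le \E[\max_{k\le K'} M_k]$ (since each $p_k \le 1/K'$), and then invokes Gonnet for the uniform case with $N = \mathcal{O}(K')$. Your main route instead fixes the number $K$ of bins and \emph{increases the number of balls} via the rejection-sampling coupling, landing on the uniform problem with $\floor{2cN}$ balls in $K$ bins. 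What you gain is an explicit, unambiguous coupling: the paper's ``it is clear that $\E[\max_k N_k] \le \E[\max_k M_k]$'' is true but the underlying stochastic-domination/coupling argument is left to the reader, whereas your thinning construction makes the inequality $\max_k N_k \le \max_k \widetilde N_k(T)$ pathwise. Your self-contained alternative via the binomial tail bound $P(N_k \ge t) \le (eN\pi_k/t)^t$ and a union bound is also sound and has the advantage of not needing Gonnet as a black box at all; it essentially reproves the relevant special case.
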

\begin{proof}
  Let
  \begin{equation*}
    p_k \coloneqq P\left( X \in B^K_k \right) \le \norm{p}_{\infty}/K
  \end{equation*}
  and observe that the random vector $\left( N_1, \ldots, N_K \right)$
  satisfies a multi-nomial distribution with parameters $K, N$ and $(p_1,
  \ldots, p_K)$.

  The proof for the statement in the special case of $p_1 = \ldots = p_K =
  1/K$ is given in Gonnet~\cite{gon81}, so we only need to argue that the
  relation extends to the non-uniform case. To this end, let $K' \coloneqq
  \floor{K/\norm{p}_\infty}$ and let $(M_1, \ldots, M_{K'})$ denote a
  multi-nomial random variable with parameters $N$, $K'$ and $(1/K', \ldots,
  1/K')$. As $N = \mathcal{O}(K')$ we have by Gonnet's result that
  \begin{equation*}
    \mathbb{E}\left[ \max_{k=1, \ldots, K'} M_k \right] = \mathcal{O}\left(
      \f{\log N}{\log\log N} \right).
  \end{equation*}
  Moreover, it is clear that $\mathbb{E}\left[ \max_{k=1, \ldots, K} N_k
  \right] \le \mathbb{E}\left[ \max_{k=1, \ldots, K'} M_k \right]$ and we have
  proved the assertion.
\end{proof}

\begin{theorem}
  \label{thr:complexity-fr}
  Assume that the transition densities $p$ and $q$ have compact support in
  $\R^d$.\footnote{Obviously, this assumption can be weakened.} Moreover,
  assume that the kernel $K$ is supported in a ball of radius $R > 0$. Then
  the forward-reverse algorithm for $N$ forward and reverse trajectory based
  on a bandwidth proportional to $N^{-1/d}$ can be implemented in such a way
  that its expected cost is $\mathcal{O}(N \log N)$ as $N \to \infty$.
\end{theorem}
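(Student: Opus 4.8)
The plan is to exploit the compact support of the kernel to collapse the naive $\mathcal{O}(N^2)$ cost of the double sum defining the forward-reverse estimator down to essentially linear cost, the extra logarithmic factor arising only from an initial sorting pass. Recall from Theorem~\ref{FRR} and the surrounding discussion that, taking $M = \widetilde{M} = N$, the estimator for~\eqref{eq:for-rev-intro} is a ratio of two double sums of the schematic form
\begin{equation*}
  \frac{1}{N^2} \sum_{i=1}^N \sum_{j=1}^N h\!\left( X^{(i)}, Y^{(j)} \right) K_\epsilon\!\left( Y_{\widehat{n}_l}^{(j)} - X_{n^\ast}^{(i)} \right) \mathcal{Y}^{(j)},
\end{equation*}
over the $N$ forward trajectories $X^{(i)}$ and the $N$ reverse trajectories $(Y^{(j)}, \mathcal{Y}^{(j)})$. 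Since $K$ is supported in the ball of radius $R$, the scaled kernel $K_\epsilon(u) = \epsilon^{-d} K(u/\epsilon)$ vanishes whenever $|u| > \epsilon R$; with $\epsilon \sim N^{-1/d}$ this means the $(i,j)$ term is nonzero only for the comparatively few pairs with $\bigl| Y_{\widehat{n}_l}^{(j)} - X_{n^\ast}^{(i)} \bigr| \le \epsilon R$.

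First I would overlay a cubic grid on the common compact support $D$ of the forward and reverse endpoints, with cells of side length comparable to $\epsilon R$, arranged so that any interacting pair lies either in the same cell or in one of the $3^d-1$ cells adjacent to it. Because $\epsilon R \sim R\,N^{-1/d}$ while $D$ is fixed, the number $K$ of cells satisfies $K \asymp N$; in particular $N = \mathcal{O}(K)$, which is precisely the regime of Lemma~\ref{lem:ball-bin}. The algorithm then sorts the forward endpoints $X_{n^\ast}^{(i)}$ by cell index (cost $\mathcal{O}(N \log N)$) and, for each reverse trajectory $Y^{(j)}$, loops only over the forward trajectories falling in the cell containing $Y_{\widehat{n}_l}^{(j)}$ and in its neighbours. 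By the support property of $K_\epsilon$ this enumerates all nonvanishing summands, so the estimator is computed exactly.

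It then remains to bound the expected cost of the interaction loop. Writing $N_k^X$ and $N_k^Y$ for the numbers of forward and reverse endpoints in cell $k$, and $\mathcal{N}(k)$ for $k$ together with its $3^d-1$ neighbours, the number of elementary operations is
\begin{equation*}
  \sum_{k} N_k^X \sum_{k' \in \mathcal{N}(k)} N_{k'}^Y \le 3^d \Bigl( \max_{k'} N_{k'}^Y \Bigr) \sum_k N_k^X = 3^d N \max_{k'} N_{k'}^Y,
\end{equation*}
where the final equality uses $\sum_k N_k^X = N$. Since $N$ is deterministic, taking expectations and applying Lemma~\ref{lem:ball-bin} to the reverse endpoints yields a bound $3^d N\, \E[\max_{k'} N_{k'}^Y] = \mathcal{O}(N \log N/\log\log N)$. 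The interaction loop therefore costs $\mathcal{O}(N \log N/\log\log N)$ in expectation, which is dominated by the $\mathcal{O}(N \log N)$ sorting step, so the total expected cost is $\mathcal{O}(N \log N)$.

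I expect the main subtlety to be verifying the hypotheses of Lemma~\ref{lem:ball-bin} for the reverse endpoint $Y_{\widehat{n}_l}^{(j)}$: the lemma requires its law to possess a uniformly bounded density on $D$. This follows from the compact support of $q$ together with boundedness of the one-step reverse transition density, but it should be recorded explicitly (the analogous statement for the forward endpoint would instead let one control $\max_k N_k^X$, giving the same conclusion by symmetry). A secondary, purely bookkeeping, point is to confirm that after sorting the forward endpoints one can enumerate a cell together with its neighbours in $\mathcal{O}(1)$ amortized time per query (e.g.\ via bucket-offset indexing), so that no hidden logarithmic factors slip into the interaction loop.
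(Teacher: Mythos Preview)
Your proposal is correct and follows essentially the same route as the paper: both arguments overlay a cubic grid with $K \asymp N$ cells on the common compact support, sort the endpoints by cell index at cost $\mathcal{O}(N\log N)$, restrict the double sum to the $3^d$ neighbouring cells, and invoke Lemma~\ref{lem:ball-bin} to bound the expected maximal occupancy. The only cosmetic differences are that the paper sorts both forward and reverse endpoints (you sort only the forward ones and bucket-look-up the reverse), and your interaction-loop bound $3^d N\,\E[\max_{k'} N_{k'}^Y] = \mathcal{O}(N\log N/\log\log N)$ is written out slightly more explicitly than in the paper.
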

\begin{proof}
  In order to increase the clarity of the argument, we re-write the double sum
  in the forward-reverse algorithm to a simpler form, which highlights the
  computational issues. Indeed, we are trying to compute a double sum of the
  form
  \begin{equation}
    \label{eq:double-sum-simplified}
    \sum_{i=1}^N \sum_{j=1}^N F_{i,j} K_\epsilon\left(X^i_{n^\ast} -
      Y^j_{\hat{n}_l} \right),
  \end{equation}
  where $F_{i,j}$ obviously depends on the whole $i$th sample of the forward
  process $X$ and on the whole $j$th sample of the reverse process $(Y,
  \mathcal{Y})$.

  We may assume that the end points $X^i_{n^\ast}$ and $Y^j_{\hat{n}_l}$ of
  the $N$ samples of the forward and reverse trajectories are contained in a
  compact set $[-L,L]^d$. (Indeed, the necessary re-scaling operation can
  obviously be done with $\mathcal{O}(N)$ operations.) In fact, for ease of
  notation we shall assume that the points are actually contained in
  $[0,1]^d$. We sub-divide $[0,1]^d$ in boxes with side-length $S \epsilon$,
  where $S > R$ is chosen such that $1/(S \epsilon) \in \mathbb{N}$. Note that
  there are $K := (S \epsilon)^{-d}$ boxes which we order lexicographically
  and associate with the numbers $1, \ldots, K$ accordingly.

  In the next step, we shall order the points $X^i_{n^\ast}$ and
  $Y^j_{\hat{n}_l}$ into these boxes. First, let us define a function $f_1:
  [0,1]^d \to \{1, \ldots, 1/(S\epsilon)\}^d$ by setting
  \begin{equation*}
    f_1(x) := \left( \lceil x_1/(S\epsilon) \rceil, \ldots, \lceil x_d/(S
      \epsilon) \rceil \right),
  \end{equation*}
  with $\lceil \cdot \rceil$ denoting the smallest integer larger or equal
  than a number. Moreover, define $f_2: \{1, \ldots, 1/(S\epsilon)\}^d \to
  \{1, \ldots, K\}$ by
  \begin{equation*}
    f_2(i_1, \ldots, i_d) := (i_1-1) (S \epsilon)^{-d+1} + (i_2-1) (S
    \epsilon)^{-d+2} + \cdots + (i_d-1) + 1.
  \end{equation*}
  Obviously, a point $x \in [0,1]^d$ is contained in the box number $k$ if and
  only if $f_2(f_1(x)) = k$.\footnote{To make this construction fully
    rigorous, we would have to make the boxes half-open and exclude the
    boundary of $[0,1]^d$.}
  Now we apply a sorting algorithm like quick-sort to both sets of points
  $\left(X^1_{n^\ast}, \ldots, X^N_{n^\ast} \right)$ and $\left(
    Y^1_{\hat{n}_l}, \ldots, Y^N_{\hat{n}_l} \right)$ using the ordering
  relation defined on $[0,1]^d \times [0,1]^d$ by
  \begin{equation*}
    x < y : \iff f_2(f_1(x)) < f_2(f_1(y)).
  \end{equation*}
  Sorting both sets incurs a computational cost of $\mathcal{O}(N \log N)$, so
  that we can now assume that the vectors $X^i_{n^\ast}$ and $Y^i_{\hat{n}_l}$
  are ordered.

  Notice that $K_{\epsilon}(x-y) \neq 0$ if and only if $x$ and $y$ are
  situated in neighboring boxes, i.e., if $\abs{f_1(x) - f_1(y)}_{\infty} \le
  1$, where we define $\abs{\alpha}_\infty := \max_{i=1, \ldots, d}
  \abs{\alpha_i}$ for multi-indices $\alpha$. Moreover, there are $3^d$ such
  neighboring boxes, whose indices can be easily identified, in the sense that
  there is a simple set-valued function $f_3$ which maps an index $k$ to the
  set of all the indices $f_3(k)$ of the at most $3^d$ neighboring
  boxes. Moreover, for any $k \in \{1, \ldots, K\}$ let $X^{i(k)}_{n^\ast}$ be
  the first element of the ordered sequence of $X^i_{n^\ast}$ lying in the box
  $k$. Likewise, let $Y^{j(k)}_{\hat{n}_l}$ be the first element in the
  ordered sequence $Y^j_{\hat{n}_l}$ lying in the box with index $k$. Note
  that identifying these $2K$ indices $i(1), \ldots, i(K)$ and $j(1), \ldots,
  j(K)$ can be achieved at computational costs of order $\mathcal{O}(K\log N)
  = \mathcal{O}(N \log N)$.

  After all these preparations, we can finally express the double
  sum~\eqref{eq:double-sum-simplified} as
  \begin{equation}
    \label{eq:double-sum-computational}
    \sum_{i=1}^N \sum_{j=1}^N F_{i,j} K_\epsilon\left(X^i_{n^\ast} -
      Y^j_{\hat{n}_l} \right) = \sum_{k =1}^K \sum_{r \in f_3(k)}
    \sum_{i=i(k)}^{i(k+1)-1} \sum_{j=j(r)}^{j(r+1)-1} F_{i,j}
    K_\epsilon\left(X^i_{n^\ast} - Y^j_{\hat{n}_l} \right).
  \end{equation}
  Regarding the computational complexity of the right hand side, note that we
  have the deterministic bounds
  \begin{gather*}
    K = \mathcal{O}(N),\\
    \abs{f_3(k)} \le 3^d.
  \end{gather*}
  Moreover, regarding the stochastic contributions, the expected maximum
  number of samples $X^i_{n^\ast}$ ($Y^j_{\hat{n}_l}$, respectively) contained
  in any of the boxes is bounded by $\mathcal{O}(\log N)$ by
  Lemma~\ref{lem:ball-bin}, i.e.,
  \begin{equation*}
    \mathbb{E}\left[ \max_{r=1, \ldots, K} (j(r+1) - j(r)) \right]
    \mathcal{O}(\log N),
  \end{equation*}
  which then needs to be multiplied by the total number $N$ of points
  $X^i_{n^\ast}$ to get the complexity of the double summation step.
\end{proof}

\begin{remark}
  As becomes apparent in the proof of Theorem~\ref{thr:complexity-fr}, the
  constant in front of the asymptotic complexity bound does depend
  exponentially on the dimension $d$.
\end{remark}

\begin{remark}
  Notice that the box-ordering step can be omitted by maintaining a list of
  all the indices of trajectories whose end-points lie in every single
  box. The asymptotic rate of complexity of the total algorithm does not
  change by omitting the ordering, though.
\end{remark}

\section{Applications of the FREM algorithm} \label{sec:appl}

The forward reverse EM algorithm is a versatile tool for parameter estimation in dynamic stochastic models. It can be applied in discrete time Markov models, but also in the the setting of discrete observations of time-continuous Markov processes such as diffusions for example. 

In this section we give examples from both worlds: we start by a discretized Ornstein-Uhlenbeck process that serves as a benchmark model, since the likelihood function can be treated analytically. Then we give an example of a partially hidden Markov model that is motivated be applications in system biology in \cite{langrock}. Finally, we remark on limitations of the EM algorithm when the log-likelihood is not integrable and demonstrate these limitations in the context of a Cox-Ingersoll-Ross process. For a complex real data application of our method we refer the interested reader to the forthcoming companion paper \cite{BMTV}.

\subsection{Ornstein-Uhlenbeck dynamics}
\label{sec:ou}

In this section we apply the forward-reverse EM algorithm to simulated data from a discretized Ornstein-Uhlenbeck process. The corresponding Markov chain is thus given by
\begin{equation} \label{eq:ou}
  X_{n+1} = X_n + \lambda  X_n \dt + \dW_{n+1},\quad n\geq 0
\end{equation}
where $W_n$ are independent random variables distributed according to $N(0,\Delta t)$. The drift parameter $\lambda \in \mathbb{R}$ is unknown and we will employ the forward reverse EM algorithm to estimate it from simulated data. The Ornstein-Uhlenbeck model has the advantage that the likelihood estimator is available in closed form and we can thus compare it to the results of the EM algorithm.

In each simulation run we suppose that we have known observations \[X_{0}, X_{10\Delta t}, \ldots,X_{40 \Delta t}\]
for varying step size $\Delta t$ and use the EM methodology to approximate the likelihood function in between. We perform six iteration of the algorithm with increasing number of data points $N$.

In table \ref{tableOUsim1} we summarize the results of two runs for the discrete Ornstein-Uhlenbeck chain. The mean and standard deviation are estimated from 1000 Monte Carlo iterations.  We find that already after three steps the mean  is very close to the corresponding estimate of the true MLE. This indicates a surprisingly fast convergence for this example. Note also that the approximated value of the likelihood function stabilizes extremely fast at the maximum.

\begin{table}
\begin{tabular}{ c c c c c c c}

  $\Delta t$ &N &bandwidth & mean $\hat \lambda$ & std dev $\hat \lambda$ & likel. & std dev likel. \\
\toprule
 0.1 & 2000&0.0005 & 0.972 & 0.0135 &  -3.402 & 0.00290\\
&8000&0.000125&1.098&0.00841&-3.383&0.00062\\
&32000& 3.125e-05& 1.132 &0.00476 &-3.381 &0.000123\\
&128000&7.812e-06 & 1.151&0.00236& -3.381& 2.783e-05\\
&512000&1.953e-06&  1.157& 0.00117& -3.381& 4.745e-06\\
&2048000&4.882e-07 & 1.159 &0.000581& -3.381& 1.005e-06\\
\midrule
 0.05&2000 &0.0005 &1.160 &0.0141 &-3.107 &0.000854 \\
&8000 &0.000125 &1.247 &0.00872 &-3.103 &9.867e-05 \\
&32000 &3.125e-05 &1.253 &0.00468 &-3.103 &1.329e-05 \\
&128000 &7.812e-06  &1.265 &0.00225 &-3.103 &3.772e-06 \\
&512000 &1.953e-06 &1.265 &0.00111 &-3.103& 6.005e-07 \\

\end{tabular}
\caption{Behavior of the forward-reverse EM algorithm for a discretized
  Ornstein-Uhlenbeck model for different step sizes $\Delta t$, initial guess
  $\lambda = 0.5$ and true MLE $\hat \lambda_{\text{MLE}} = 1.161$ and
  $1.266$, respectively.}
\label{tableOUsim1}
\end{table}

Table \ref{tableOUsim2} gives results for the same setup as in Table \ref{tableOUsim1} but with initial guess $\lambda = 2$ such that the forward-reverse EM algorithm converges from above to the true maximum of the likelihood function. We observe that the smaller step size $\Delta t = 0.05$ results in a more accurate approximation of the likelihood and also of the true MLE. It seems that the step size has crucial influence on the convergence rate of the algorithm, since for $\Delta t = 0.05$ the likelihood stabilizes already from the second iteration.

\begin{table}
\begin{tabular}{ c c c c c c c}

  $\Delta t$ &N &bandwidth & mean $\hat \lambda$ & std dev $\hat \lambda$ & likel. & std dev likel. \\
\toprule
 0.1 & 2000&0.0005 & 1.554 &0.0353 &-3.457 &0.0134\\
&8000&0.000125&1.312& 0.0127& -3.393& 0.00221\\
&32000& 3.125e-05& 1.217& 0.00544& -3.382& 0.000351\\
&128000&7.812e-06 & 1.185 &0.00245& -3.381& 5.817e-05\\
&512000&1.953e-06&  1.168& 0.00121& -3.381& 1.227e-05 \\

\midrule
 0.05&2000 &0.0005 &1.390& 0.0248& -3.108& 0.00238 \\
&8000 &0.000125 & 1.289&0.00925& -3.103& 0.000130 \\
&32000 &3.125e-05 &1.261& 0.00471& -3.103& 1.451e-05  \\
&128000 &7.812e-06  &1.266& 0.00221& -3.103& 2.538e-06 \\
&512000 &1.953e-06 &1.266& 0.00113& -3.103& 5.855e-07  \\

\end{tabular}
\caption{Behavior of the forward-reverse EM algorithm for a discretized
  Ornstein-Uhlenbeck model for different step sizes $\Delta t$, initial guess
  $\lambda = 2$ and true MLE $\hat \lambda_{\text{MLE}} = 1.161$ and $1.266$,
  respectively.}
\label{tableOUsim2}
\end{table}

In Figure \ref{OUhist1} the empirical distribution of 1000 estimates for $\lambda$ is plotted. The initial value was $0.5$ and the true maximum of the likelihood function is at $1.161$. The step size between observations was chosen to be $\Delta t = 0.1$. The histogram on the left shows the estimates after only one iteration and on the right the estimates were obtained from five iterations of the forward-reverse EM algorithm.

Figure \ref{OUhist2} depicts the distribution of 1000 Monte Carlo samples of the likelihood values that led to the estimates in Figure \ref{OUhist1}. It is interesting to see that after one iteration of the algorithm the likelihood values are approximately bell shaped (left histogram) whereas after five iterations the distributions becomes more and more one-sided as would be expected, since the EM algorithm only increase the likelihood from step to step towards the maximum.

\begin{figure}[htb]
\centering
\includegraphics[scale=0.4]{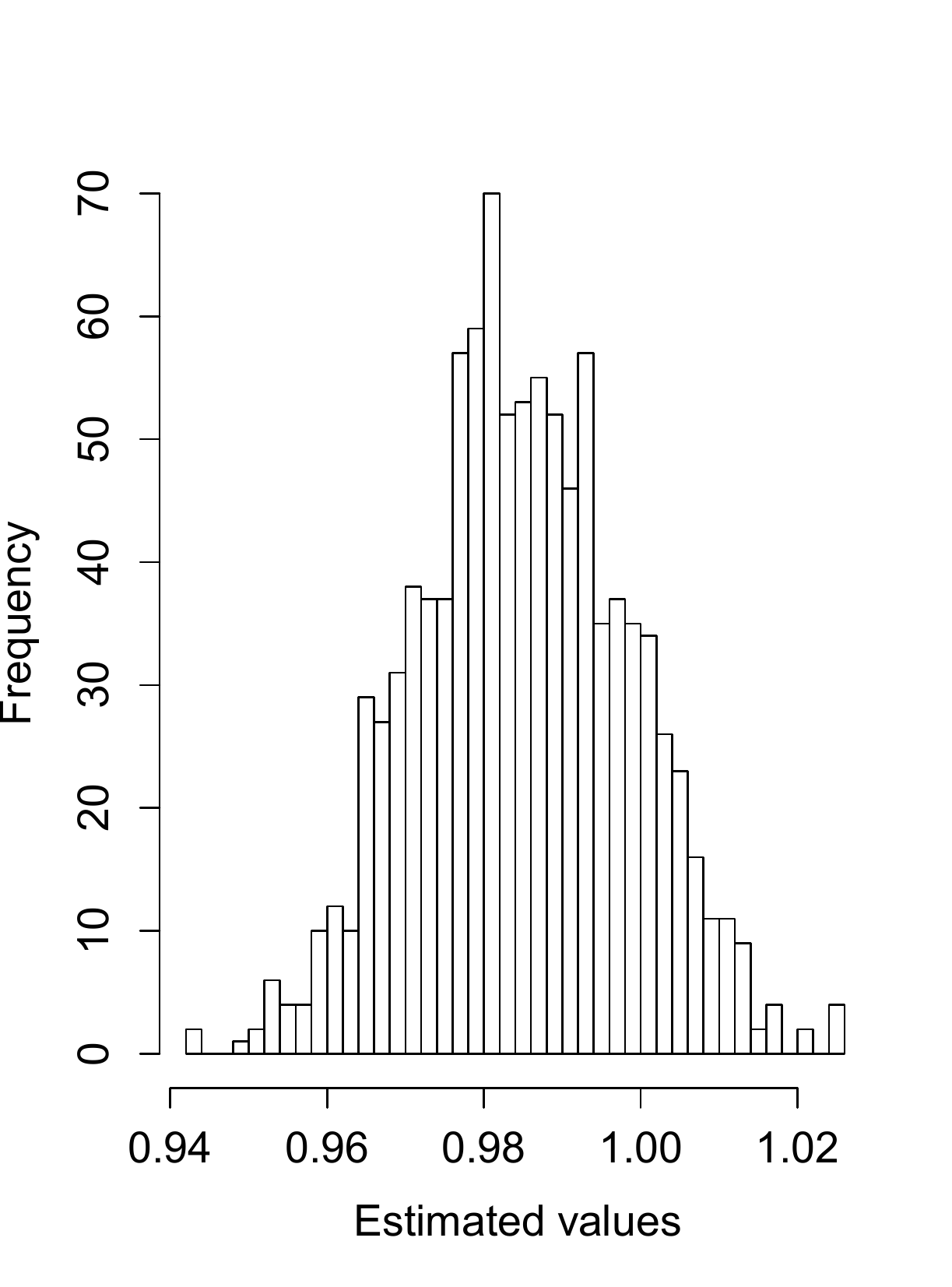}
\includegraphics[scale=0.4]{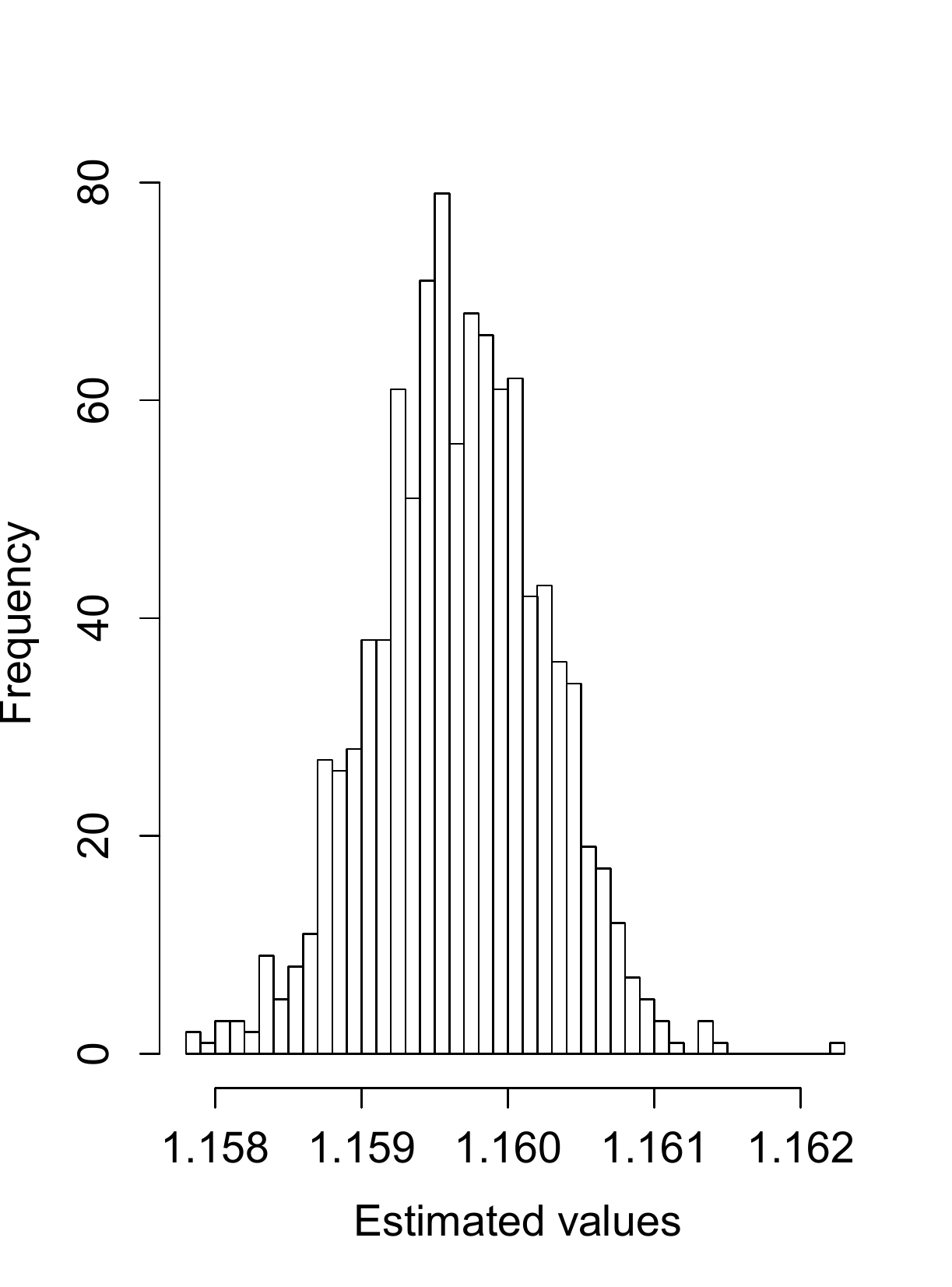}
\caption{Empirical distribution of 1000 estimates after one iteration (right) and after five iteration (left) of the forward-reverse EM algorithm.} \label{OUhist1}
\end{figure}

\begin{figure}[htb]
\centering
\includegraphics[scale=0.4]{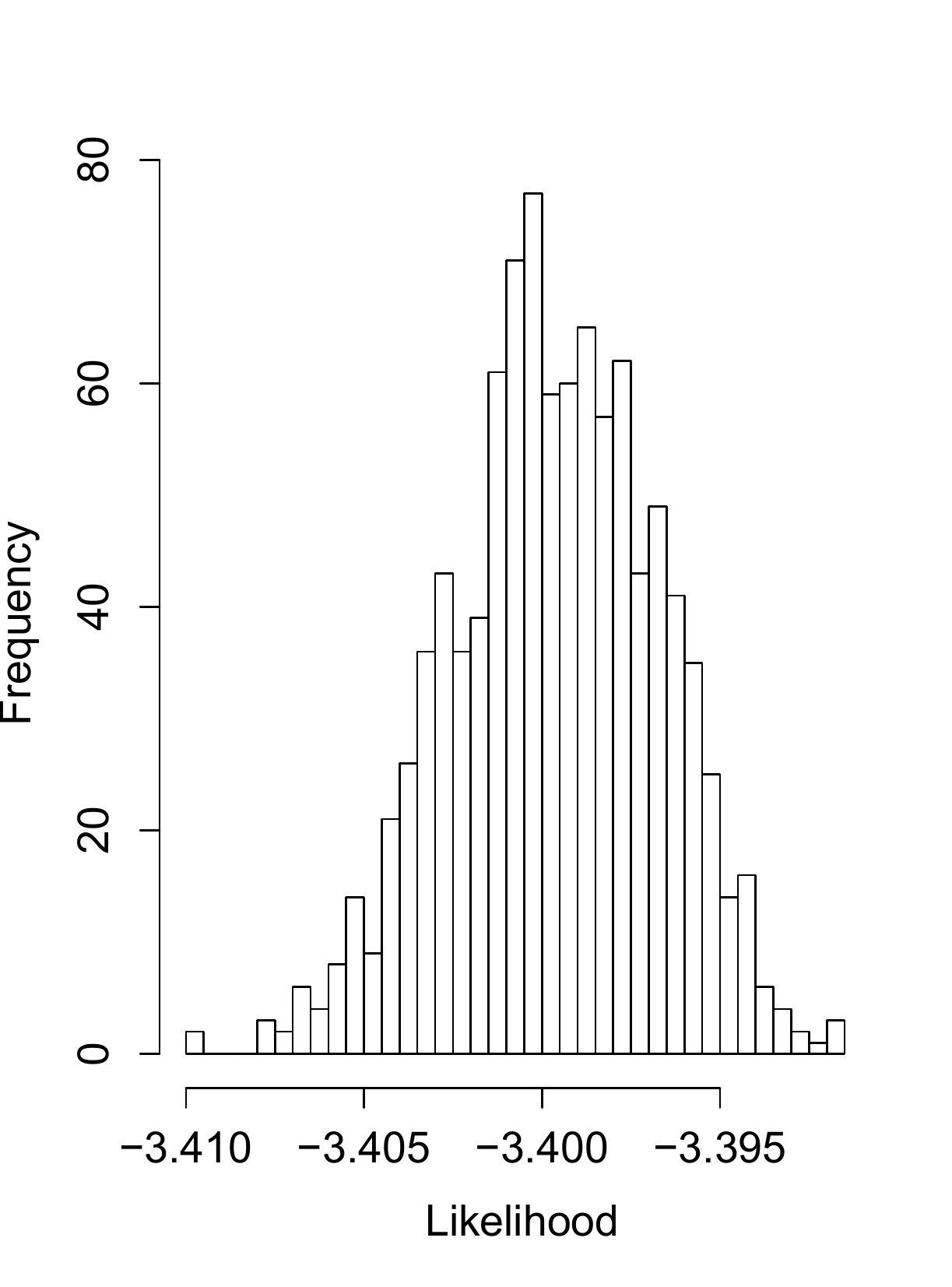}
\includegraphics[scale=0.4]{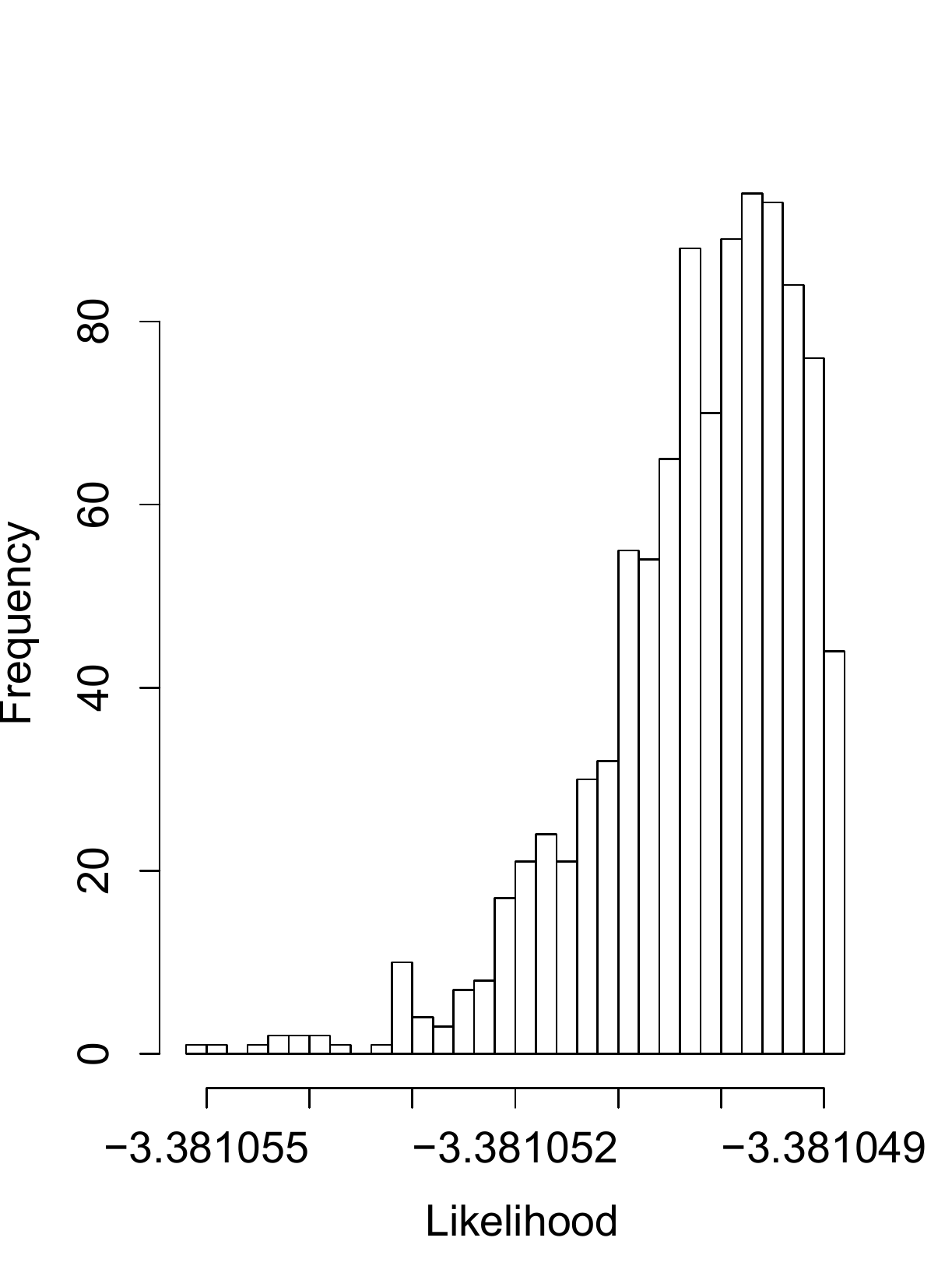}
\caption{Empirical distribution of the likelihood values of 1000 Monte Carlo samples after one iteration (right) and after five iteration (left) of the forward-reverse EM algorithm.} \label{OUhist2}
\end{figure}

Figure \ref{OUbox} shows the convergence of the forward reverse EM algorithm when the number of iterations increases. We find that already after 4 iterations the estimate is very close to the true MLE for $\lambda$. After six iterations the algorithm has almost perfectly stabilized at the value of the true MLE $\lambda = 1.16$.

\begin{figure}[htb]
\centering
\includegraphics[scale=0.7]{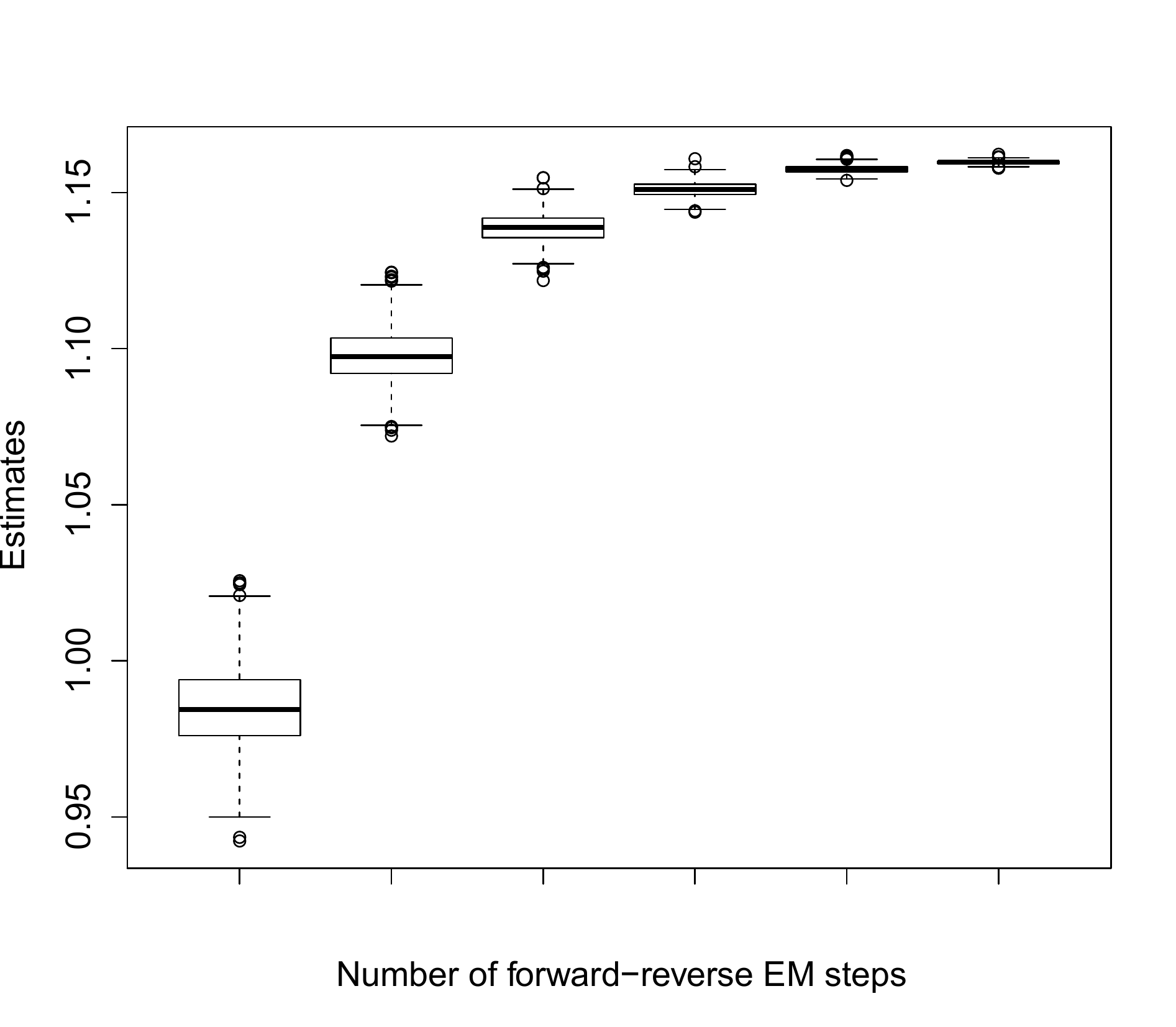}

\caption{Convergence of the forward-reverse EM algorithm from one to six iterations for each 1000 estimates of $\lambda$. The value of the true MLE is $\hat \lambda=1.161$.} \label{OUbox}
\end{figure}

\subsection{Hidden Markov models}

Our forward reverse EM algorithm can also be applied in the context of hidden Markov models (HMM). A typical HMM consists of observed state process and a hidden (i.e. unobserved) Markov chain that models the internal regime switching of the state process (see for example \cite{macdonald} for a general introduction). When for the hidden process only some values can be observed we speak of a partially HMM.

Recently, partially HMMs have been applied with some success in the modeling of ecological systems to understand the relation between animal populations and environmental parameters in a dynamical setting. The example considered here is a partially hidden Markov model and is inspired by applications in modeling mark-recapture-recovery data as discussed in \cite{langrock} for example. In mark-recapture-recovery studies every individual of a population is marked by some tag or ring that uniquely identifies the individual and some properties of interest are measured (e.g. weight, age etc). In subsequent surveys each individual is either recaptured such that the measurements could be taken again or if it can not be recaptured this will be recorded as a missing data point. Hence, we obtain a sequence of data with missing values that leads naturally to a partially HMM.

Let us consider a two-dimensional Markov chain $X_n=(X_n^1, X_n^2)$. For the first component $N$ observations $X_0^1, \ldots, X_N^1$ are given, whereas the second component $X^2$ is only observed partially after each $k \in \{1, \ldots, N\}$ time steps, i.e.
\[
	X_0^2, X_k^2, X_{2k}^2, \ldots, X_{\lfloor \frac{N}{k}\rfloor k}^2
\]
are known. Suppose that the one-step transition probabilities are normal distributions:
\[
\mathcal{L} (X_{n+1} | X_n) = N( \mu^\theta (X_n), \Sigma),
\]
with mean given by $\mu^\theta (x) = \theta + x^2$ for an unknown parameter $\theta \in \mathbb{R}^2$. The covariance matrix $\Sigma \in \mathbb{R}^{2 \times 2}$ is assumed to be constant.

In this setup the $\log$-likelihood function based on full observations $X$ is given by
\[
l_c (\theta, X_0, \ldots, X_N) = 1- 2 \pi \det (\Sigma)^{1/2} - \frac{1}{2} \sum_{l = 1}^{N} Y_l^\theta.
\]
where
\begin{align*}
  Y_l^\theta = \left( X_l - \mu^\theta (X_{l-1})\right)^\top \Omega \left( X_l - \mu^\theta (X_{l-1})\right)
\end{align*}
with precision matrix $\Omega = (\omega_{ij}) := \Sigma^{-1}$. In coordinates we thus have
\[
	Y_l^\theta = \omega_{11} (U_{l,1})^2 + (2\omega_{12}) U_{l,1} U_{l,2} + \omega{22} U_{l,2}^2,
\]
where $ U_{l,i} := X_l^i - \theta_i - (X_{l-1}^i)^2$. Calculating the score function gives therefore
\[
\frac{\partial l_c}{\partial \theta_1} = - \frac{1}{2} \sum_{l = 1}^{N}\omega_{11} \left( 2(X_l^1)^2 + \theta^1 + 2 (X_{l-1}^1)^2 \right) + 2 \omega_{12} \left( \theta^2 - X_l^2 + (X_{l-1}^2)^2 \right)
\]
and
\[
	\frac{\partial l_c}{\partial \theta_2} = - \frac{1}{2} \sum_{l = 1}^{N}\omega_{22} \left( 2(X_l^2)^2 + \theta^2 + 2 (X_{l-1}^1)^2 \right) + 2 \omega_{12} \left( \theta^1 - X_l^1 + (X_{l-1}^1)^2 \right).
\]
Since not all values of $X^2$ are observed, this $\log$-likelihood cannot be maximized directly. Instead, the forward reverse algorithm approximates the expected $\log$-likelihood given the partial observations. The E-step in this model reads as follows.

\subsection*{FR E-step}
Evaluate the forward-reverse approximation
\[
  Q_m (\theta, \theta_m, X_1, \ldots, X_N) = \mathbb{E}_{\theta_m^{FR}} \left[ l_c (\theta, X_0, \ldots, X_n) | X_0^1, \ldots, X_N^1; X_{i k}^2, i =0,\ldots, \lfloor N/k\rfloor \right].
\]
This can be rewritten as
\[
Q_m (\theta, \theta_m, X_1, \ldots, X_N) = \operatorname{const} - \frac{1}{2}  \mathbb{E}_{\theta_m}^{FR} \left[ \sum_{l =1}^N Y_l^\theta | X_0^1, \ldots, X_N^1; X_{i k}^2, i =0,\ldots, \lfloor N/k\rfloor \right].
\]
By approximating directly the score function this can be further simplified to
\begin{align*}
	Q_m^{\partial \theta_1} &(\theta, \theta_m, X_1, \ldots, X_N) =  - \frac{1}{2}(\omega_{11} \theta^1 +2\omega_{12} \theta^2) \\
	\mathbb{E}_{\theta_m}^{FR} &\left[ \sum_{l =1}^N 2\omega_{11}((X_l^2)^2 + (X_{l-1}^1)^2) 2\omega_{12} ((X_{l-1}^2)^2- X_l^2) | X_0^1, \ldots, X_N^1; X_{i k}^2, i =0,\ldots, \lfloor N/k\rfloor \right].
\end{align*}
and
\begin{align*}
	Q_m^{\partial \theta_2} &(\theta, \theta_m, X_1, \ldots, X_N) =  - \frac{1}{2}(\omega_{22} \theta^2 +2\omega_{12} \theta^1) \\
	\mathbb{E}_{\theta_m}^{FR} &\left[ \sum_{l =1}^N 2\omega_{22}((X_l^2)^2 + (X_{l-1}^2)^2) 2\omega_{12} ((X_{l-1}^1)^2- X_l^1) | X_0^1, \ldots, X_N^1; X_{i k}^2, i =0,\ldots, \lfloor N/k\rfloor \right].
\end{align*}
Due to the linearity of the score function in $\theta$ the M-step is now straightforward.
\subsection*{M-step}
Solve the linear system
\begin{align}
	Q_m^{\partial \theta_1} &(\theta, \theta_m, X_1, \ldots, X_N)=0 \\
	Q_m^{\partial \theta_2} &(\theta, \theta_m, X_1, \ldots, X_N)=0
\end{align}
for $\theta_1$ and $\theta_2$ to update $\theta_{m+1} = (\theta_1, \theta_2)$.

\begin{remark}
 The example given here can also be extended to HMMs with more involved likelihood structure. In particular, the case of a non linear score function in $\theta$ can easily be treated with standard numerical methods such that also in these examples the M-step remains feasible (see also \cite{liu} and \cite{meng1993}).
\end{remark}

\subsection{A final note of warning by a discrete Cox-Ingersoll-Ross example}

\label{sec:cir}

Consider the Markov chain given by
\begin{equation}
X_{n+1}=X_{n}+\lambda\left(  \theta-X_{n}\right)  \Delta t+\sigma\left\vert
X_{n}\right\vert ^{\gamma}\Delta W_{n+1},\label{eq:cir}%
\end{equation}
where $\Delta t$ is fixed and $\Delta W_{n}$ are independent random variables
distributed according to $\mathcal{N}(0,\Delta t)$. Moreover, we assume that
$0\leq\gamma$ is fixed and known. The other parameters $\sigma$, $\lambda$ and
$\theta$ are unknown and need to be estimated. In the case
$\gamma=1/2$ it corresponds a of Euler discretization of the
Cox-Ingersoll-Ross model from finance.

Up to constant terms (in the un-known parameters $\sigma$, $\lambda$ and
$\theta$), the log-likelihood function of a sequence of observations
$\mathbf{x}=(x_{0},\ldots,x_{N})$ of the full path of the process $X$ is given
by
\begin{align*}
\ell_{c}\left(  \sigma,\lambda,\theta;\mathbf{x}\right)   &  =\log\left(
\prod_{i=1}^{N}p(x_{i-1},x_{i})\right)  \\
&  =-N\log\sigma-\frac{1}{2\sigma^{2}\Delta t}\sum_{i=1}^{N}\frac{\left(
x_{i}-(1-\lambda\Delta t)x_{i-1}-\lambda\theta\Delta t\right)  ^{2}%
}{\left\vert x_{i-1}\right\vert ^{2\gamma}}\\
&  =-N\log\sigma-\frac{1}{2\sigma^{2}\Delta t}\sum_{i=1}^{N}\Biggl[\frac
{x_{i}^{2}}{\left\vert x_{i-1}\right\vert ^{2\gamma}}-2(1-\lambda\Delta
t)\frac{x_{i}x_{i-1}}{\left\vert x_{i-1}\right\vert ^{2\gamma}}\\
&  \quad-2\lambda\theta\Delta t\frac{x_{i}}{\left\vert x_{i-1}\right\vert
^{2\gamma}}+(1-\lambda\Delta t)^{2}\frac{x_{i-1}^{2}}{\left\vert
x_{i-1}\right\vert ^{2\gamma}}\\
&  \quad+2\lambda\theta\Delta t(1-\lambda\Delta t)\frac{x_{i-1}}{\left\vert
x_{i-1}\right\vert ^{2\gamma}}+\lambda^{2}\theta^{2}\Delta t^{2}\frac
{1}{\left\vert x_{i-1}\right\vert ^{2\gamma}}\biggr].
\end{align*}
Assume that we have given partial observations $X_{i_{0}},\ldots,X_{i_{r}}$ with
$i_{0}=0<\cdots<i_{r}=N$, while the remaining points $X_{j}$, $j\notin
\{i_{0},\ldots,i_{r}\}$, are assumed to be unobserved. Define random variables
$Z_{0}:=N$ and
\begin{align*}
&  Z_{1}:=\sum_{i=1}^{N}\frac{X_{i}^{2}}{\left\vert X_{i-1}\right\vert
^{2\gamma}}, & Z_{2} &  :=\sum_{i=1}^{N}\frac{X_{i}}{\left\vert X_{i-1}%
\right\vert ^{2\gamma}},\\
&  Z_{3}:=\sum_{i=1}^{N}\frac{X_{i-1}X_{i}}{\left\vert X_{i-1}\right\vert
^{2\gamma}}, & Z_{4} &  :=\sum_{i=1}^{N}\frac{1}{\left\vert X_{i-1}\right\vert
^{2\gamma}},\\
&  Z_{5}:=\sum_{i=1}^{N}\frac{X_{i-1}}{\left\vert X_{i-1}\right\vert
^{2\gamma}}, & Z_{6} &  :=\sum_{i=1}^{N}\frac{X_{i-1}^{2}}{\left\vert
X_{i-1}\right\vert ^{2\gamma}}.
\end{align*}
Hence, we have with $\mathbf{X}=(X_{0},\ldots,X_{N})$
\begin{multline*}
\ell_{c}\left(  \sigma,\lambda,\theta;\mathbf{X}\right)  =-Z_{0}\log
\sigma-\frac{1}{2\sigma^{2}\Delta t}\bigl[Z_{1}-2\lambda\theta\Delta
tZ_{2}-2(1-\lambda\Delta t)Z_{3}\\
+\lambda^{2}\theta^{2}\Delta t^{2}Z_{4}+2\lambda\theta\Delta t(1-\lambda\Delta
t)Z_{5}(1-\lambda\Delta t)^{2}Z_{6}\bigr].
\end{multline*}

Then we do the E-step. Given guesses $\sigma^{n},\lambda^{n},\theta^{n}$ for
the parameters, let
\begin{equation}
z_{i}:=\mathbb{E}_{\sigma^{n},\lambda^{n},\theta^{n}}\left[  \left.
Z_{i}\right\vert X_{i_{0}}=x_{i_{0}},\ldots,X_{i_{r}}=x_{i_{r}}\right]  ,\quad
i=1,\ldots,6,\label{infE}%
\end{equation}
and observe that
\begin{align*}
Q(\sigma,\lambda,\theta;\sigma^{n},\lambda^{n},\theta^{n};x_{i_{0}}%
,\ldots,x_{i_{r}}) &  :=\mathbb{E}_{\sigma^{n},\lambda^{n},\theta^{n}}\left[
\left.  \ell_{c}\left(  \sigma,\lambda,\theta;\mathbf{X}\right)  \right\vert
X_{i_{0}}=x_{i_{0}},\ldots,X_{i_{r}}=x_{i_{r}}\right]  \\
&  =-z_{0}\log\sigma-\frac{1}{2\sigma^{2}\Delta t}\bigl[z_{1}-2\lambda
\theta\Delta tz_{2}-2(1-\lambda\Delta t)z_{3}\\
&  \quad+\lambda^{2}\theta^{2}\Delta t^{2}z_{4}+2\lambda\theta\Delta
t(1-\lambda\Delta t)z_{5} + (1-\lambda\Delta t)^{2}z_{6}\bigr].
\end{align*}
Now the trouble is that for $\gamma\geq1/2$ some of the expectations in
(\ref{infE}), in particular $z_{4},$ may fail to exist. As such, this example
shows that in certain cases the expectation of the log-likelihood statistic in
the EM algorithm does not exist and that, as a consequence, the EM algorithm
can not be applied. We underline that existence of the log-likelihood
expectation is a premises for the EM algorithm in general and is not related
to the particular approach presented in this paper.

In the case $\gamma<1/2$ the expectations in (\ref{infE}) do exist and we may
proceed with first order conditions for finding the maximum of
\[
(\sigma,\lambda,\theta)\mapsto Q(\sigma,\lambda,\theta;\sigma^{n},\lambda
^{n},\theta^{n};x_{i_{0}},\ldots,x_{i_{r}}).
\]
We have that
\begin{align*}
\partial_{\sigma}Q &  =-\frac{z_{0}}{\sigma}+\frac{1}{\sigma^{3}\Delta
t}\bigl[z_{1}-2\lambda\theta\Delta tz_{2}-2(1-\lambda\Delta t)z_{3}\\
&  \quad+\lambda^{2}\theta^{2}\Delta t^{2}z_{4}+2\lambda\theta\Delta
t(1-\lambda\Delta t)z_{5}(1-\lambda\Delta t)^{2}z_{6}\bigr],\\
\partial_{\lambda}Q &  =-\frac{1}{2\sigma^{2}\Delta t}\bigl[-2\theta\Delta
tz_{2}+2\Delta tz_{3}+2\lambda\theta^{2}\Delta t^{2}z_{4}\\
&  \quad+2\theta\Delta t(1-2\lambda\Delta t)z_{5}-2\Delta t(1-\lambda\Delta
t)z_{6}\bigr],\\
\partial_{\theta}Q &  =-\frac{\lambda}{2\sigma^{2}\Delta t}\bigl[-2\Delta
tz_{2}+2\lambda\theta\Delta t^{2}z_{4}+2\Delta t(1-\lambda\Delta
t)z_{5}\bigr].
\end{align*}
and we so obtain the maximizers given by
\begin{align*}
\sigma^{2} &  =\frac{z_{3}^{2}z_{4}-2z_{2}z_{3}z_{5}+z_{1}z_{5}^{2}+z_{3}%
^{2}z_{6}-z_{1}z_{4}z_{6}}{\Delta tz_{0}\left(  z_{5}^{2}-z_{4}z_{6}\right)
},\\
\lambda &  =\frac{z_{3}z_{4}-z_{2}z_{5}+z_{5}^{2}-z_{4}z_{6}}{\Delta t\left(
z_{5}^{2}-z_{4}z_{6}\right)  },\\
\theta &  =\frac{z_{3}z_{5}-z_{2}z_{6}}{z_{3}z_{4}-z_{2}z_{5}+z_{5}^{2}%
-z_{4}z_{6}}.
\end{align*}
For the forward-reverse algorithm, we finally need to specify the reverse
chain. In this case, we propose to take the reverse chain
\begin{equation}
Y_{n+1}=Y_{n}-\lambda\left(  \theta-Y_{n}\right)  \Delta t+\sigma\left\vert
Y_{n}\right\vert ^{\gamma}\Delta\widetilde{W}_{n+1}.\label{eq:cir-reverse}%
\end{equation}
In order to get the dynamics of $\mathcal{Y}$, we need to derive the
normalization function $\psi$ between the one-step transition densities $p$ of
the forward and $q$ of the reverse processes. (We suppress the indices as we
are in a time-homogeneous situation.) For~(\ref{eq:cir}) together
with~(\ref{eq:cir-reverse}) the one-step transition densities are normal
densities in the forward variables,
\begin{gather*}
p(x,y)=\frac{1}{\sqrt{2\pi\Delta t}\sigma\left\vert x\right\vert ^{\gamma}%
}\exp\left(  -\frac{\left(  y-x-\lambda(\theta-x)\Delta t\right)  ^{2}%
}{2\sigma^{2}\left\vert x\right\vert ^{2\gamma}\Delta t}\right)  ,\\
q(y,z)=\frac{1}{\sqrt{2\pi\Delta t}\sigma\left\vert y\right\vert ^{\gamma}%
}\exp\left(  -\frac{\left(  z-y+\lambda(\theta-y)\Delta t\right)  ^{2}%
}{2\sigma^{2}\left\vert y\right\vert ^{2\gamma}\Delta t}\right)  .
\end{gather*}
Hence, we get
\begin{equation}
\psi(y,z)=\frac{p(z,y)}{q(y,z)}=\left\vert \frac{y}{z}\right\vert ^{\gamma
}\exp\left(  -\frac{1}{2\sigma^{2}\Delta t}\left[  \frac{\left(
y-z-\lambda(\theta-z)\Delta t\right)  ^{2}}{\left\vert z\right\vert ^{2\gamma
}}-\frac{\left(  z-y+\lambda(\theta-y)\Delta t\right)  ^{2}}{\left\vert
y\right\vert ^{2\gamma}}\right]  \right)  .\label{eq:psi-cir}%
\end{equation}

\bibliographystyle{plainnat}
\bibliography{BMSref}

\end{document}